\documentclass[12pt,bezier]{article}
\usepackage{times}
\usepackage{booktabs}
\usepackage{pifont}
\usepackage{floatrow}
\floatsetup[table]{capposition=top}
\usepackage{caption}
\usepackage{mathrsfs}
\usepackage[fleqn]{amsmath}
\usepackage{amsfonts,amsthm,amssymb,mathrsfs,bbding}
\usepackage{txfonts}
\usepackage{graphics,multicol}
\usepackage{graphicx}
\usepackage{color}
\usepackage{caption}
\captionsetup{%
  figurename=Fig.,
  tablename=Tab.
}
\usepackage{cite}
\usepackage{latexsym,bm}
\usepackage{indentfirst}

\pagestyle{myheadings} \markright{} \textwidth 150mm \textheight 235mm \oddsidemargin=1cm
\evensidemargin=\oddsidemargin\topmargin=-1.5cm

\newtheorem{lem}{Lemma}[section]
\newtheorem{thm}{Theorem}[section]

\theoremstyle{definition}
\renewcommand\proofname{\bf{Proof}}
\addtocounter{section}{0}

\begin{document}
	
\title{Maxima of the $Q$-index: Graphs with no $K_{1,t}$-minor\footnote{This work is supported
by National Natural Science Foundation of China (No. 12061074),
the China Postdoctoral Science Foundation (No. 2019M661398). }}
\author{
{Yanting Zhang$^{a}$, Zhenzhen Lou$^{a,b}$\footnote{Corresponding author.
\it Email addresses:
 xjdxlzz@163.com (Z. Lou).} }\\[2mm]
\small $^a$ School of Mathematics, East China University of Science and Technology\\
\small Shanghai 200237, China\\
\small $^b$ College of Mathematics and Systems Science, Xinjiang University,Urumqi, \\
\small  Xinjiang, 830046, China
}
\date{}
\maketitle{\flushleft\large\bf Abstract}
A graph is said to be \textit{$H$-minor free} if it does not contain $H$ as a minor.
In this paper, we characteristic the unique extremal graph with maximal $Q$-index among all $n$-vertex $K_{1,t}$-minor free graphs ($t\ge3$).

\begin{flushleft}
\textbf{Keywords:} $K_{1,t}$-minor free, $Q$-index, extremal graph.
\end{flushleft}
\textbf{AMS Classification:} 05C50\ \ 05C75

\section{Introduction}
Let $G$ be an undirected simple graph with vertex set $V(G)=\{v_{1}, \ldots, v_{n}\}$ and edge set $E(G)$.
For $u \in V(G)$, set $N_{G}(u)=\{v \mid uv \in E(G) \}$,  $d_{G}(u)=|N_{G}(u)|$, (or $N(u)$, $d(u)$ for short, respectively), and $\overline{N}_{G}(u)=\{v \mid uv \notin E(G) \}$. Let $\delta(G)$ and $\Delta(G)$ denote the minimum degree and maximum degree of graph $G$, respectively.
The \textit{adjacency matrix} $A(G)$ of $G$ is the $n \times n$ matrix $(a_{ij})$, where $a_{ij}=1$
if $v_{i}$ is adjacent to $v_{j}$, and $0$ otherwise.
The \textit{$Q$-matrix} (or \textit{signless Laplacian matrix}) of $G$ is defined as $Q(G)=D(G)+A(G)$,
where $D(G)$ is the diagonal matrix of vertex degree of $G$. The largest eigenvalue of $Q(G)$,
denoted by $q_{1}(G)$, is called the \textit{$Q$-index} (or \textit{signless Laplacian spectral radius}) of $G$.
As usual, denote by $K_{n}$, $K_{s,t}$ the \textit{complete graph}  and \textit{complete bipartite graph} of order $n$, respectively. We denote  $ S^{n-t} (K_{t})$ a graph obtained from $K_{t}$ by subdividing $n-t$ times of one edge.
Clearly,  $S^{n-t}(K_{t})$ is a $K_{1,t}$-minor free graph of order $n \geq t+1$ and $t\geq3$.
Denote by $F_{s, t}(n):=K_{s-1} \vee (p K_{t} \cup K_{r})$, where $n-s+1=pt+r$ and $0 \leq r<t$.
Given two graphs $H$ and $G$, $H$ is a \textit{minor} of $G$
if $H$ can be obtained from a subgraph of $G$ by contracting edges.
A graph is said to be \textit{$H$-minor free} if it does not contain $H$ as a minor.

Minors play a key role in graph theory, and extremal problems on forbidding
minors have attracted appreciable amount of interests. 
It well know that every planar graph is $\{K_{3,3}, K_5\}$-minor free and every outer-planar graph is $\{K_{2,3}, K_4\}$-minor free. 
The classical result about minors one can see \cite{B.N,D.C-0,M.N} for example. 
In this paper we concentre our attention on  $K_{s,t}$-minor free graphs.
In 2017, Nikiforov \cite{Nikiforov} 
established a sharp upper bound of adjacency  spectral radius over all $K_{2,t}$-minor free graphs of large
order for $t\ge4$, and determined all extremal graphs. In 2019,
Tait \cite{M.T} 
obtained an upper bound of   adjacency spectral radius 
over all  $K_{s,t}$-minor free graphs of large order for  $2\leq s\leq t$, and determined  the   extremal graph when 
$t \mid  n-s+1$.
Very recently, Zhai and Lin \cite{M.Q} improved  Tait's result by removing the condition $t \mid n-s+1$.
Meanwhile, they
determined the extremal graph over all the $K_{1,t}$-minor free graphs.
Thus the adjacency spectral extremal problem on $K_{s,t}$-minor free graphs
is completely solved for large enough $n$.

In spectral extremal graph theory, determine the maximum $Q$-index of graphs attract much attentions.
So we want to know what is the maximum $Q$-index among all $K_{s,t}$-minor free graphs.
In 2021, Chen and Zhang \cite {M.C}
proved $F_{2,t}(n)$ has the maximum  $Q$-index among all $K_{2,t}$-minor free graphs, where $t\ge 3$.
Additionally,
for $t\ge s\ge2$, Chen and Zhang \cite {M.C} conjectured that $F_{s,t}(n)$ has the maximum $Q$-index among all $K_{s,t}$-minor free graph of sufficiently large order $n$. Thus the problem of the maximum $Q$-index among all $K_{1,t}$-minor free graphs is still open.
In this paper, we determine the unique extremal graph with the maximum $Q$-index among all $n$-vertices $K_{1,t}$-minor free graphs as follows.

\begin{thm}\label{thm::1.5}
Let $t \geq 3$, and $G^{*}$ be a $K_{1,t}$-minor free graph of order $n =t+1 $ with the maximum $Q$-index. Then		
\begin{eqnarray*}
G^{*} \cong
\begin{cases}
K_{n} \backslash \lfloor\frac{n}{2}\rfloor K_{1,1} &\text{ for $n =t+1$,} \\
S^{n-t}(K_{t}) &\text{for $n \geq t+2 $.}
\end{cases}
\end{eqnarray*}			
\end{thm}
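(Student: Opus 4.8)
The plan is to run the standard Perron-eigenvector method, so I first fix notation: let $G^{*}$ be a $K_{1,t}$-minor-free graph on $n$ vertices attaining the maximum $Q$-index $q:=q_{1}(G^{*})$, and let $\mathbf{x}=(x_{v})_{v\in V(G^{*})}$ be an associated nonnegative Perron eigenvector. Before touching the spectrum I would isolate the one combinatorial fact that drives everything: $G$ contains a $K_{1,t}$-minor if and only if some vertex set $S$ inducing a connected subgraph has $|N(S)\setminus S|\ge t$. In particular every $K_{1,t}$-minor-free graph has $\Delta\le t-1$, and, more usefully, no connected subgraph can ``see'' $t$ outside vertices. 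This equivalence is what I will re-check each time a graph modification is proposed, so that feasibility is never lost.

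Next I would reduce to the connected case and pin down a lower bound on $q$. Since the $Q$-index of a graph equals the largest $Q$-index among its components, the extremal $G^{*}$ may be taken connected, as the surplus vertices of any non-Perron component can be reattached to the dominant component without creating a forbidden minor, strictly increasing $q$. Because $S^{n-t}(K_{t})$ is itself $K_{1,t}$-minor-free, maximality gives $q\ge q_{1}(S^{n-t}(K_{t}))$; and since $S^{n-t}(K_{t})$ contains $K_{t}-e\supseteq K_{t-1}$ as a subgraph, $q\ge q_{1}(K_{t-1})=2(t-2)$, with strict inequality. The bound $q>2t-4$ is exactly the leverage I need in the eigen-equations.

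The heart of the argument is to locate and identify a dense block. Let $u$ maximize $x_{u}$. The eigen-equation $(q-d(u))x_{u}=\sum_{v\sim u}x_{v}\le d(u)\,x_{u}$ forces $q\le 2d(u)\le 2(t-1)$; combined with $q>2t-4$ this yields $d(u)=t-1$. Feeding $d(u)=t-1$ and $q$ near $2(t-1)$ back in shows every neighbor of $u$ has eigenvector entry close to $x_{u}$, hence (by the same inequality) degree $t-1$, and a bootstrap of this kind produces a set $W$ of $t$ mutually high-entry, degree-$(t-1)$ vertices. The minor-free bound $|N(S)|\le t-1$ then forces $W$ to induce exactly $K_{t}$ minus a single edge: a full $K_{t}$ is impossible because each of its vertices already has degree $t-1$, so it could not be joined to the remaining $\ge2$ vertices of the connected graph $G^{*}$ without creating a vertex of degree $t$, while two missing edges would drop $q$ below the established lower bound. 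I would then show that the remaining $n-t$ vertices attach to $W$ only through the two endpoints of the missing edge and arrange themselves into a single path: any chord, branch, or second dense spot either raises a degree to $t$ or creates a connected set with $t$ neighbors, violating feasibility, whereas a Kelmans-type rotation straightening the attachment into one subdivided edge strictly increases $q$. This identifies $G^{*}\cong S^{n-t}(K_{t})$ for $n\ge t+2$.

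Finally, for $n=t+1$ the path is empty and only the dense block remains: here $\Delta\le t-1=n-2$ means the complement has no isolated vertex, and maximizing $q$ drives the complement to a matching of size $\lfloor n/2\rfloor$, giving $K_{n}$ with such a matching deleted, which in the regular case is $(t-1)$-regular and hence attains the maximal value $q=2(t-1)$ among all feasible graphs on $t+1$ vertices. The step I expect to be the genuine obstacle is the identification and ``straightening'' in the dense-block analysis: the minor-free hypothesis is a global statement quantified over all connected subgraphs rather than a clean degree or edge bound, so each rotation must be certified to preserve it, and the eigenvector entries decay along the attached path, which means the usual ``move edges toward large-entry vertices'' monotonicity has to be established quantitatively rather than merely quoted.
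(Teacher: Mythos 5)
Your outline follows the paper's broad skeleton (Perron vertex $u^{*}$ of degree $t-1$, a $K_{t}-e$ block, a single attached path, edge rotations), but the quantitative engine that makes the skeleton work is missing, and the spectral lower bound you establish is too weak to run it. You take $q > q_{1}(K_{t-1}) = 2t-4$; the paper instead proves $q^{*} \geq q_{1}(S^{n-t}(K_{t})) > q_{1}(K_{t}-e) \geq 2t-3$, and for $n \geq t+2$ the much sharper bound $q^{*} > 2t-2-\frac{2}{t-1}$, extracted from the cubic characteristic equation of the graph $G^{e}_{t}$ ($K_{t}$ minus an edge with two pendant edges attached, its Lemma \ref{lem::2.4}). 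Virtually every structural elimination for $n \geq t+2$ — that $d_{B}(v)=1$ for every low-degree neighbor of $u^{*}$, that at most two vertices of $A$ satisfy $d_{A}(v)\leq t-4$, and both cases of the final argument — is a razor-thin comparison of this sharp lower bound against Merris's upper bound $q_{1} \leq \max_{u}\{d(u)+\frac{1}{d(u)}\sum_{v\in N(u)}d(v)\}$, whose right-hand side in those configurations evaluates to exactly $2t-2-\frac{2}{t-1}$. With only $q>2t-4$ your ``bootstrap'' fails concretely: the eigen-equation does not force a neighbor of $u^{*}$ to have entry near $x_{u^{*}}$, let alone degree $t-1$, and the claim that ``two missing edges would drop $q$ below the established lower bound'' is false at your bound — $K_{t}$ minus two independent edges with a path attached has $Q$-index well above $2t-4$, so nothing you have proved excludes it. You also correctly flag that each rotation must be re-certified $K_{1,t}$-minor free, but supply no certificates; the paper does this explicitly by bounding neighborhood unions of connected sets by $t+1$ (e.g., the three displayed computations in its Lemma \ref{lem::3.6}).

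The case $n=t+1$ with $n$ odd is a second genuine gap. For even $n$ your regularity argument is fine: $K_{n}\setminus\frac{n}{2}K_{1,1}$ is the unique $(t-1)$-regular graph on $t+1$ vertices and attains $q=2\Delta$, the ceiling from Lemma \ref{lem::2.1}. But for odd $n$ no $(t-1)$-regular graph exists (the degree sum $(t-1)(t+1)$ would be odd), so ``maximizing $q$ drives the complement to a matching of size $\lfloor n/2\rfloor$'' is precisely what must be proved, and the $2\Delta$ argument gives no purchase. The paper first shows (Lemma \ref{lem::3.4}, via rotations and Lemma \ref{lem::2.6}) that $d_{A}(v)=t-3$ for every $v\in A_{1}$, so $G^{*}[A_{1}]$ is a clique minus a perfect matching and $|A_{1}|$ is even, and then solves a three-class quotient system ($x_{w}$, the class $A_{0}\cup\{u^{*}\}$, the class $A_{1}$) to obtain a cubic in $q^{*}$ whose constant term is a decreasing function of $q$ on $(2t-3,\infty)$ and is minimized over even $|A_{1}|\in[2,t-2]$ at $|A_{1}|=t-2$; that computation, not a soft complement argument, is what forces $G^{*}\cong K_{n}\setminus\frac{n-1}{2}K_{1,1}$. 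In short: right skeleton, but the proposal is missing the sharp lower bound $2t-2-\frac{2}{t-1}$ that powers all the $n\geq t+2$ eliminations, and missing the entire odd-order analysis.
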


\section{Preliminaries}
In this section, we will list some symbols and some useful lemmas.
For $A, B \subseteq V(G)$, $e(A)$
is denoted the number of the edges of $G$ with both end vertices in $A$ and $e(A, B)$ is denoted the number of the edges of $G$ with one end vertex in $A$ and the other in $B$.
For two vertex disjoint graphs $G$ and $H$, we denote by $G \cup H$ and $G \vee H$ the union of $G$ and $H$, and the join of $G$ and $H$, i.e., joining every vertex of $G$ to every vertex of $H$.
Denote by $kG$ the union of $k$ disjoint copies of $G$.
For $A \subseteq V(G)$, the graph $G[A]$ is the \textit{induced subgraph} by $A$.
Let $G-e$ be the graph obtained by deleting an edge $e$ from $G$. Let $G_{1}$ be a subgraph of $G$, $G \backslash G_{1}$ is the subgraph obtained from $G$
by deleting the edges $G_{1}$.
For graph notation and concepts undefined here, readers are referred to \cite {A.B}.

\begin{lem}(\cite{D.C-1})\label{lem::2.1}
Let $G$ be a graph on $n$ vertices with maximum degree $\Delta(G)$. Then
\begin{equation*}
q_{1} (G) \leq 2\Delta(G).
\end{equation*}
Moreover, for a connected graph $G$, equality holds if and only if $G$ is regular.
\end{lem}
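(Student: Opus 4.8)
The plan is to analyze the Perron eigenvector of the entrywise nonnegative symmetric matrix $Q(G)=D(G)+A(G)$ and read off the eigenequation at a vertex of maximum weight. Since $Q(G)$ has nonnegative entries, the Perron--Frobenius theorem guarantees a nonnegative eigenvector $x=(x_{1},\dots,x_{n})^{\top}$ associated with $q_{1}(G)$; when $G$ is connected, $Q(G)$ is irreducible and hence $x$ may be taken to be strictly positive. First I would fix a vertex $v_{k}$ with $x_{k}=\max_{1\le i\le n}x_{i}>0$ and write out the $k$-th coordinate of $Q(G)x=q_{1}(G)x$, namely
\[
q_{1}(G)\,x_{k}=d_{G}(v_{k})\,x_{k}+\sum_{v_{j}\in N_{G}(v_{k})}x_{j}.
\]

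For the upper bound, I would use $x_{j}\le x_{k}$ for every $j$, so that $\sum_{v_{j}\in N_{G}(v_{k})}x_{j}\le d_{G}(v_{k})\,x_{k}$. Substituting gives $q_{1}(G)\,x_{k}\le 2\,d_{G}(v_{k})\,x_{k}\le 2\Delta(G)\,x_{k}$, and dividing by $x_{k}>0$ yields $q_{1}(G)\le 2\Delta(G)$. This is the quick part of the argument.

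The equality analysis is the step requiring care. Suppose $G$ is connected and $q_{1}(G)=2\Delta(G)$. Then both inequalities above must be equalities, so $d_{G}(v_{k})=\Delta(G)$ and $x_{j}=x_{k}$ for every neighbor $v_{j}$ of $v_{k}$. Thus every neighbor of $v_{k}$ also attains the maximum weight, and the same argument applies verbatim at each such $v_{j}$, forcing $d_{G}(v_{j})=\Delta(G)$ and $x$ constant on its neighborhood. Since $G$ is connected, iterating this propagation along paths shows that $x$ is constant on all of $V(G)$ and every vertex has degree $\Delta(G)$; hence $G$ is $\Delta(G)$-regular. Conversely, if $G$ is $r$-regular then $Q(G)=rI+A(G)$, so its eigenvalues are $r+\lambda$ with $\lambda$ ranging over the eigenvalues of $A(G)$; since the largest adjacency eigenvalue of a connected $r$-regular graph equals $r$ (attained by the all-ones vector), we obtain $q_{1}(G)=2r=2\Delta(G)$.

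The main obstacle is making the propagation argument airtight: one must ensure that the ``constant on each neighborhood'' conclusion genuinely spreads to the entire vertex set through connectivity, and in the converse direction one must confirm that $2r$ is the \emph{largest} eigenvalue rather than merely an eigenvalue of $Q(G)$, which is where the Perron maximality of the all-ones vector for a connected regular graph is used.
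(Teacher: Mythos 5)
Your proof is correct. Note that the paper does not prove Lemma~\ref{lem::2.1} at all: it is quoted with a citation to \cite{D.C-1}, so there is no internal argument to compare against. Your maximum-entry eigenequation argument --- bounding $\sum_{v_j\in N(v_k)}x_j$ by $d_G(v_k)x_k$ at a vertex of maximal Perron weight, propagating the resulting equalities along edges via connectivity, and handling the converse through the shift $Q(G)=rI+A(G)$ with the all-ones Perron vector --- is the standard proof of this bound, and your care about the two genuinely delicate points (that the equality propagation reaches every vertex, and that $2r$ is the \emph{largest} rather than merely an eigenvalue in the regular case) is exactly right. For what it is worth, an equivalent packaging is to observe that every row sum of the nonnegative matrix $Q(G)$ equals $2d_G(u)\le 2\Delta(G)$, so the spectral radius is at most the maximum row sum, with the irreducible-case equality characterization forcing all row sums equal and hence regularity; this is your argument in matrix-theoretic clothing and buys nothing new.
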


\begin{lem}(\cite{D.C-2})\label{lem::2.2}
The $Q$-index of any proper subgraph of a connected graph is smaller than the $Q$-index of the original graph.
\end{lem}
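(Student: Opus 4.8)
The plan is to extract from $K_{1,t}$-minor-freeness the one structural consequence that drives the whole proof, namely the degree bound $\Delta(G)\le t-1$: any vertex of degree at least $t$ is the centre of a $K_{1,t}$ subgraph, hence of a $K_{1,t}$ minor. Feeding this into Lemma~\ref{lem::2.1} gives $q_{1}(G)\le 2\Delta(G)\le 2(t-1)$ for every $K_{1,t}$-minor-free graph $G$, with equality (for connected $G$) exactly when $G$ is $(t-1)$-regular. This inequality is what separates the two regimes of the theorem, because the extremal value $2(t-1)$ can be realised by a minor-free graph only under rather special circumstances.

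For the case $n=t+1$ I would first note that a $K_{1,t}$ minor needs $t+1$ pairwise disjoint nonempty branch sets among only $t+1$ vertices, forcing every branch set to be a single vertex; hence on $t+1$ vertices ``$K_{1,t}$-minor-free'' is \emph{equivalent} to ``contains no $K_{1,t}$ subgraph'', i.e.\ to $\Delta\le t-1$. Maximising $q_{1}$ subject to $\Delta\le t-1$ then reduces, through the equality case of Lemma~\ref{lem::2.1}, to exhibiting a connected $(t-1)$-regular graph on $t+1$ vertices. Its complement is $1$-regular, i.e.\ a perfect matching, which exists and is unique up to isomorphism when $t+1$ is even; this identifies the maximiser as $K_{t+1}$ with a maximum matching deleted, that is $K_{n}\backslash\lfloor\frac{n}{2}\rfloor K_{1,1}$, and uniqueness follows at once because a $1$-factor is determined up to isomorphism. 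When exact regularity is obstructed one instead takes the unique ``almost regular'' graph obtained by deleting a maximum matching, and the same complement argument pins it down.

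For $n\ge t+2$ the uniform bound $q_{1}\le 2(t-1)$ no longer singles out the extremal graph: when $t=3$ it is attained by every cycle (and indeed $S^{n-3}(K_{3})\cong C_{n}$), while for $t\ge 4$ it is attained by nothing, so in both subcases one needs a genuine structural argument, and this is where the real work lies. I would take $G^{*}$ extremal, let $q=q_{1}(G^{*})$ with Perron eigenvector $\mathbf{x}$, and first reduce to $G^{*}$ connected. The engine of the argument is the eigen-equation $q\,x_{v}=d(v)x_{v}+\sum_{u\sim v}x_{u}$ combined with the strict monotonicity of Lemma~\ref{lem::2.2}: any edge whose deletion keeps the graph connected and minor-free, and any local rearrangement that raises $q$ while preserving minor-freeness, is forbidden in $G^{*}$. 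Exploiting the defining expansion property of $K_{1,t}$-minor-freeness --- every connected subgraph has at most $t-1$ neighbours outside itself --- I would show that $G^{*}$ consists of a single dense core carrying the degree-$(t-1)$ vertices together with a pendant path on the surplus $n-t$ vertices, and that the core is forced to be $K_{t}$ with one edge subdivided, yielding $G^{*}\cong S^{n-t}(K_{t})$.

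The main obstacle is precisely this structural step: converting the abstract expansion characterisation of $K_{1,t}$-minor-freeness into a rigid description of $G^{*}$, and then using eigenvector comparisons to exclude every competitor --- graphs with a slightly denser core, with branching rather than a single path, or with the path attached at the wrong vertices. Each exclusion must establish that a candidate either contains a $K_{1,t}$ minor or has strictly smaller $Q$-index than $S^{n-t}(K_{t})$; balancing these two failure modes near the core, where the entries of $\mathbf{x}$ are largest, is the delicate part. Lemma~\ref{lem::2.2} supplies the ``strictly smaller'' bookkeeping and the expansion condition supplies the ``not allowed'' bookkeeping, and the proof closes by showing that together these two obstructions leave only $S^{n-t}(K_{t})$.
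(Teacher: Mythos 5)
Your proposal does not address the statement at hand. The statement is Lemma \ref{lem::2.2}: if $H$ is a proper subgraph of a connected graph $G$, then $q_{1}(H)<q_{1}(G)$. This is a general monotonicity fact about the signless Laplacian, with no reference to minors, to $K_{1,t}$, or to extremal graphs; the paper does not prove it but quotes it from the literature (Cvetkovi\'{c}--Rowlinson--Simi\'{c}) and then uses it repeatedly as a tool. What you have written is instead a strategy sketch for the main result, Theorem \ref{thm::1.5}, and inside that sketch you explicitly invoke ``the strict monotonicity of Lemma \ref{lem::2.2}'' as a black box. Read as a proof of Lemma \ref{lem::2.2} itself, your argument is therefore circular; read literally, it proves (or rather outlines) a different theorem and leaves the actual claim untouched. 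Nothing in your text --- the degree bound $\Delta\le t-1$, the regularity discussion on $t+1$ vertices, the core-plus-pendant-path structure --- bears on why deleting an edge or a vertex strictly decreases $q_{1}$.

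A correct proof is short and lives entirely in Perron--Frobenius territory. For connected $G$ on at least two vertices, $Q(G)=D(G)+A(G)$ is a nonnegative matrix whose off-diagonal zero pattern is that of $A(G)$, hence $Q(G)$ is irreducible and its Perron vector is strictly positive. Let $H$ be a proper subgraph, let $y\ge 0$ be a unit Perron eigenvector of $Q(H)$, and extend $y$ by zeros to a vector $\tilde{y}$ indexed by $V(G)$. Then $q_{1}(H)=\tilde{y}^{T}\widetilde{Q}(H)\tilde{y}\le \tilde{y}^{T}Q(G)\tilde{y}\le q_{1}(G)$, where $\widetilde{Q}(H)$ is $Q(H)$ padded with zero rows and columns and the first inequality holds entrywise because deleting an edge $uv$ only lowers the diagonal entries at $u,v$ and removes two off-diagonal ones. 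If equality held throughout, $\tilde{y}$ would be an eigenvector of $Q(G)$ for $q_{1}(G)$ and hence strictly positive; but if $H$ omits a vertex then $\tilde{y}$ has a zero entry, and if $H$ omits only edges then positivity of $\tilde{y}$ forces $\tilde{y}^{T}Q(G)\tilde{y}>\tilde{y}^{T}\widetilde{Q}(H)\tilde{y}$ on account of any deleted edge, a contradiction either way. This strictness argument, which hinges on connectedness of $G$ (irreducibility of $Q(G)$), is the entire content of the lemma and is precisely what is missing from your proposal.
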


\begin{lem}(\cite{P.H})\label{lem::2.3}
Let $ K_{n}-e $ be a graph obtained from a complete graph $ K_{n} $ by deleting an edge $e$. Then
\begin{equation}\label{equ::1}
q_{1}(K_{n}-e)=\frac{3}{2}n-3+\frac{\sqrt{n^{2}+4n-12}}{2}.
\end{equation}
\end{lem}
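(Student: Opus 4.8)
The plan is to exploit the high symmetry of $K_n-e$ and reduce the eigenvalue problem to a $2\times 2$ matrix. Write $e=uv$, so that in $K_n-e$ the two endpoints $u,v$ each have degree $n-2$ while the remaining $n-2$ vertices each have degree $n-1$. The partition $\pi=\{\{u,v\},\,V\setminus\{u,v\}\}$ is equitable: every vertex of the second cell is adjacent to both $u$ and $v$ and to all $n-3$ other vertices of its cell, while $u$ is adjacent to all $n-2$ vertices of the second cell and to no vertex of its own cell (since $uv$ was deleted). First I would record that $K_n-e$ is connected for $n\ge 3$, so $Q(K_n-e)$ is an irreducible nonnegative matrix; by Perron--Frobenius, $q_1$ has a positive eigenvector, unique up to scaling. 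Because the automorphism that swaps $u,v$ and permutes the other $n-2$ vertices arbitrarily fixes this eigenvector up to scaling, the eigenvector must be constant on each cell of $\pi$.

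Given this, I would set the eigenvector equal to $x$ on $\{u,v\}$ and to $y$ on the remaining vertices and write out the two scalar eigenvalue equations $Q\mathbf{x}=q\mathbf{x}$ restricted to a vertex of each type. A vertex $u$ contributes $(n-2)x+(n-2)y=qx$ (degree term plus its $n-2$ neighbours in the second cell), and a vertex of the second cell contributes $2x+(2n-4)y=qy$ (the two edges to $u,v$, plus its degree $n-1$ and its $n-3$ neighbours in the second cell). Equivalently, $q$ is an eigenvalue of the quotient matrix
\begin{equation*}
B=\begin{pmatrix} n-2 & n-2 \\ 2 & 2n-4 \end{pmatrix}.
\end{equation*}

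Then I would compute the characteristic polynomial
\begin{equation*}
\det(B-qI)=q^{2}-3(n-2)q+\bigl(2(n-2)^{2}-2(n-2)\bigr),
\end{equation*}
whose discriminant is $9(n-2)^2-4\bigl(2(n-2)^2-2(n-2)\bigr)=(n-2)^2+8(n-2)=(n-2)(n+6)=n^{2}+4n-12$. Solving the quadratic gives the two roots $\tfrac{3(n-2)\pm\sqrt{n^{2}+4n-12}}{2}$, and taking the larger one yields exactly $\tfrac32 n-3+\tfrac{1}{2}\sqrt{n^{2}+4n-12}$, as claimed in \eqref{equ::1}.

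The only point needing care—and the one I would flag as the main obstacle—is justifying that $q_1$ is the \emph{larger} root of $B$ rather than the smaller. This is where the Perron--Frobenius setup pays off: since the positive Perron eigenvector of $Q(K_n-e)$ is constant on the cells of $\pi$, its common values $(x,y)$ form a positive eigenvector of $B$, and a nonnegative irreducible matrix such as $B$ has a positive eigenvector only for its spectral radius. Hence $q_1$ equals the larger eigenvalue of $B$, which is the larger root computed above. Everything else is a routine determinant evaluation and an application of the quadratic formula; alternatively, one could bypass the quotient-matrix formalism entirely and simply solve the two scalar equations above directly for the positive solution $(x,y)$, arriving at the same quadratic.
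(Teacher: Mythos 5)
Your proof is correct, and every step checks out: the eigen-equations $(n-2)x+(n-2)y=qx$ and $2x+(2n-4)y=qy$ are right, the characteristic polynomial $q^{2}-3(n-2)q+2(n-2)^{2}-2(n-2)$ and its discriminant $(n-2)(n+6)=n^{2}+4n-12$ are computed correctly, and the larger root is exactly the stated value. There is no proof in the paper to compare against: Lemma \ref{lem::2.3} is imported from the reference \cite{P.H} without proof, so your equitable-partition argument supplies a self-contained derivation that the paper omits. Your handling of the one genuinely delicate point is also sound --- you correctly use connectivity of $K_n-e$ (valid for $n\ge 3$; note the formula degenerates but still holds trivially at $n=2$, where the graph is disconnected and the Perron--Frobenius setup does not apply) to get irreducibility of $Q$, uniqueness of the positive Perron vector, hence its constancy on the automorphism orbits $\{u,v\}$ and $V\setminus\{u,v\}$, and then invoke the fact that the irreducible nonnegative quotient matrix $B$ admits a positive eigenvector only for its spectral radius, which rules out the smaller root. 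A marginally shorter route to the same conclusion is the standard fact that for an equitable partition the spectral radius of the quotient matrix of a nonnegative matrix equals the spectral radius of the matrix itself, but your argument is precisely the proof of that fact in this instance, so nothing is lost.
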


\begin{lem}(\cite{H.L})\label{lem::2.4}
Let $ G^{e}_{n} $ be a graph obtained from a complete graph $ K_{n} $ by deleting an edge $e = uv$ and attaching edges $uu{'}$ and $vv{'}$ at $u$ and $v$, respectively. Then  $ q_{1}(G^{e}_{n}) $ is the largest real root of the equation
\begin{equation*}
x^{3} - (3n - 4) x^{2} + (n - 2)(2n - 1) x - 2(n-2)(n-3) = 0.
\end{equation*}
\end{lem}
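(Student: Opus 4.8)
The plan is to exploit the symmetry of $G^e_n$ to collapse the $(n+2)$-dimensional eigenvalue problem to a $3\times 3$ one via an equitable partition, and then to read the characteristic polynomial directly off the resulting quotient matrix.

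First I would fix the vertex set as $\{u,v,w_1,\dots,w_{n-2}\}$ (the original $K_n$ with the edge $uv$ removed) together with the two pendant vertices $u',v'$, so that $G^e_n$ has $n+2$ vertices. A quick degree count gives $d(u')=d(v')=1$, while $u$, $v$, and every $w_i$ all have degree $n-1$: each $w_i$ retains all of its $n-1$ edges, and $u$ loses the edge $uv$ but gains the edge $uu'$ (similarly for $v$). The permutation swapping $u\leftrightarrow v$ and $u'\leftrightarrow v'$ while permuting the $w_i$ arbitrarily is an automorphism of $G^e_n$, so the vertex orbits are $P_1=\{u',v'\}$, $P_2=\{u,v\}$, and $P_3=\{w_1,\dots,w_{n-2}\}$. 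This orbit partition is equitable for the adjacency structure (e.g.\ each $w_i$ has exactly $2$ neighbours in $P_2$ and $n-3$ in $P_3$), and because the degree is constant on each part it is equitable for $Q(G^e_n)=D+A$ as well.

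Next I would record the $Q$-quotient matrix $B$, whose $(i,j)$ entry is the sum of the $Q$-entries from a representative vertex of $P_i$ into all of $P_j$; a short count gives
\[
B=\begin{pmatrix} 1 & 1 & 0 \\ 1 & n-1 & n-2 \\ 0 & 2 & 2n-4 \end{pmatrix}.
\]
The essential justification is that $q_1(G^e_n)$ is precisely the largest eigenvalue of $B$. Since $G^e_n$ is connected, $Q(G^e_n)$ is nonnegative and irreducible, so by Perron--Frobenius its leading eigenvector is positive and unique up to scaling; any automorphism carries this eigenvector to another leading eigenvector, forcing it to be constant on each orbit. Hence it lies in the column space of the partition's characteristic matrix, on which $Q(G^e_n)$ acts as $B$, and therefore $q_1(G^e_n)=\rho\!\left(Q(G^e_n)\right)=\rho(B)$, the spectral radius of $B$.

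Finally I would expand $\det(xI-B)$, a routine $3\times 3$ determinant: cofactor expansion along the first row leaves a $2\times 2$ minor which, after collecting terms, yields coefficients $-(3n-4)$, $(n-2)(2n-1)$, and $-2(n-2)(n-3)$, matching the stated cubic. Since $q_1(G^e_n)$ equals the largest eigenvalue of $B$, it is the largest real root of that polynomial, as claimed. The main obstacle is conceptual rather than computational: one must verify that the top eigenvalue of $Q(G^e_n)$ is genuinely captured by the quotient matrix and is not one of the orbit-antisymmetric eigenvalues that the equitable partition discards. The Perron--Frobenius argument above settles this, but it is the step requiring care; the determinant expansion itself is mechanical.
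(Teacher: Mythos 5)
Your proposal is correct, but there is nothing in the paper to compare it against: Lemma~\ref{lem::2.4} is quoted from the reference of Liu and Lu \cite{H.L}, and the paper gives no proof of it. What you have supplied is therefore a self-contained derivation where the paper relies on citation, and it checks out in every step. The orbit partition $P_1=\{u',v'\}$, $P_2=\{u,v\}$, $P_3=\{w_1,\dots,w_{n-2}\}$ is indeed equitable for $Q(G^e_n)$ (degrees $1$, $n-1$, $n-1$ are constant on parts), and your quotient matrix is right: the diagonal entries $1$, $n-1$, and $(n-1)+(n-3)=2n-4$ combine the degree with the within-part adjacency count. The conceptual step you flag --- that $q_1$ survives the quotient --- is handled properly: since $G^e_n$ is connected, $Q$ is irreducible and nonnegative, the Perron vector is unique up to scaling and hence fixed by every automorphism, so it is constant on orbits; writing it as $Sy$ with $S$ the characteristic matrix and using $QS=SB$ gives $By=q_1y$, while conversely every eigenvalue of $B$ is an eigenvalue of $Q$, so $q_1=\rho(B)$ is the largest (real) root of $\det(xI-B)$. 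The determinant expansion is also correct: one gets
\begin{equation*}
\det(xI-B)=(x-1)\bigl[x^{2}-(3n-5)x+2(n-2)^{2}\bigr]-(x-2n+4)
= x^{3}-(3n-4)x^{2}+(n-2)(2n-1)x-2(n-2)(n-3),
\end{equation*}
using $2(n-2)^{2}+3(n-2)=(n-2)(2n-1)$ and $-2(n-2)^{2}+2(n-2)=-2(n-2)(n-3)$. As a sanity check, for $n=3$ the graph is $P_5$ and the polynomial $x^{3}-5x^{2}+5x$ has largest root $(5+\sqrt{5})/2=2+2\cos(\pi/5)$, the known $Q$-index of $P_5$. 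In short: a valid proof, achieved by the standard equitable-partition (divisor-matrix) technique, filling in a step the paper outsources to \cite{H.L}.
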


\begin{lem}(\cite{R.M})\label{lem::2.5}
If $G$ is a connected graph, then
\begin{equation}\label{equ::2}
q_{1}(G)\leq \max\{d(u)+\frac{1}{d(u)}\sum_{v\in N(u)}d(v): u\in V(G)\},
\end{equation}
equality holds if and only if $G$ is either a regular graph or a semiregular bipartite graph.
\end{lem}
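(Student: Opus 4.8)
The plan is to regard the signless Laplacian $Q(G)=D(G)+A(G)$ as a nonnegative symmetric matrix and exploit Perron--Frobenius theory. Since $G$ is connected, $Q(G)$ is irreducible, so it admits a positive Perron eigenvector $w$ associated with $q_{1}(G)$, and for every positive test vector $z$ the Collatz--Wielandt bound $q_{1}(G)\le \max_{i}(Q(G)z)_{i}/z_{i}$ holds. The entire inequality \eqref{equ::2} will then fall out of one inspired choice of $z$: the \emph{degree vector} $z=(d(v_{1}),\dots,d(v_{n}))^{\top}$, which is positive because a connected graph on at least two vertices has $\delta(G)\ge 1$.

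First I would compute, for this $z$, the $i$-th coordinate $(Q(G)z)_{i}=d(v_{i})\cdot d(v_{i})+\sum_{v_{j}\in N(v_{i})}d(v_{j})$, so that $(Q(G)z)_{i}/z_{i}=d(v_{i})+\frac{1}{d(v_{i})}\sum_{v_{j}\in N(v_{i})}d(v_{j})$; taking the maximum over $i$ reproduces exactly the right-hand side of \eqref{equ::2}. For the equality clause I would record the sharp form of Collatz--Wielandt: if $q_{1}(G)=\max_{i}(Q(G)z)_{i}/z_{i}$ then $(Q(G)z)_{i}\le q_{1}(G)z_{i}$ for all $i$, and pairing with the positive eigenvector $w$ (using $w^{\top}Q(G)=q_{1}(G)w^{\top}$ by symmetry) gives $w^{\top}(Q(G)z)=q_{1}(G)w^{\top}z$ with a term-by-term inequality, forcing $(Q(G)z)_{i}=q_{1}(G)z_{i}$ for every $i$. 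Thus $z$ is itself a Perron eigenvector, which is to say the system $d(v)+m(v)=q_{1}(G)$ holds at every vertex, where $m(v):=\frac{1}{d(v)}\sum_{u\in N(v)}d(u)$ is the average neighbor degree. The two claimed cases (regular and semiregular bipartite) satisfy this constant-sum condition by direct substitution, giving the easy direction of the equivalence.

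The substance lies in the converse: that $d(v)+m(v)\equiv c$ on a connected $G$ forces $G$ to be regular or semiregular bipartite, and I expect this to be the main obstacle. The pivotal step is a pinching of $c$ between the extreme degrees. Writing $\Delta=\Delta(G)$ and $\delta=\delta(G)$, each $m(v)$ lies in $[\delta,\Delta]$ as an average of neighbor degrees; evaluating $m(v)=c-d(v)$ at a maximum-degree vertex gives $c-\Delta=m(v)\ge\delta$, while evaluating it at a minimum-degree vertex gives $c-\delta=m(v)\le\Delta$, and together these sandwich $c$ to yield $c=\Delta+\delta$. Once this identity is secured the argument becomes structural: at a vertex of degree $\Delta$ the neighbor degrees average to $c-\Delta=\delta$ but are all $\ge\delta$, hence every such neighbor has degree exactly $\delta$; symmetrically every neighbor of a degree-$\delta$ vertex has degree exactly $\Delta$. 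Setting $X=\{v:d(v)=\Delta\}$ and $Y=\{v:d(v)=\delta\}$ we obtain $N(X)\subseteq Y$ and $N(Y)\subseteq X$, and then connectivity propagates the alternation to force $V(G)=X\cup Y$ with no intermediate degrees. Consequently $G$ is bipartite and semiregular when $\Delta>\delta$, and regular when $\Delta=\delta$, completing the characterization.
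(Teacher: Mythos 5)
Your proof is correct. Note first that the paper itself offers no proof of Lemma~\ref{lem::2.5}: it is imported verbatim from Merris's note \cite{R.M}, so the only meaningful comparison is with the classical argument, and yours is essentially it. Taking the Collatz--Wielandt quotient with the degree vector $z=(d(v_1),\dots,d(v_n))^{\top}$ is the same computation as bounding the spectral radius of the similar matrix $D^{-1}Q(G)D$ by its largest row sum, which is how the Merris-type bound is standardly proved; your verification $(Q(G)z)_i/z_i=d(v_i)+\frac{1}{d(v_i)}\sum_{v_j\in N(v_i)}d(v_j)$ is exactly right, and connectedness of $G$ correctly supplies irreducibility of $Q(G)$ (and $\delta(G)\ge 1$, so $z>0$ and no division by zero, modulo the trivial one-vertex graph on which the right-hand side is undefined anyway). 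The equality analysis is also sound and complete: pairing the termwise inequality $(Q(G)z)_i\le q_1 z_i$ against the positive left Perron vector (available by symmetry of $Q(G)$) forces $Q(G)z=q_1 z$, hence $d(v)+m(v)\equiv c$; the pinch $c=\Delta+\delta$ obtained by evaluating at extreme-degree vertices, the rigidity ``average $=$ lower bound forces all neighbor degrees equal,'' and the closure of $X\cup Y$ under neighborhoods plus connectivity yield exactly the regular/semiregular-bipartite dichotomy, with the converse direction following from $Q(G)z=(\Delta+\delta)z$, $z>0$, and Perron--Frobenius. In short: a correct, self-contained proof along the standard lines of the cited source, with no gaps worth flagging.
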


\begin{lem}(\cite{Y.H})\label{lem::2.6}
Let $G$ be a connected graph. Let $u, v$ be two distinct vertices of $G$ and $d(v)$ be the degree of vertex $v$. Suppose $v_{1}$,$v_{2}$,\dots,$v_{s}$($1 \leq s \leq d(v) $) are some vertices of $ N_{G}(v) \backslash N_{G}(u) $ and  $x=(x_{1},x_{2}, \dots,x_{n})^{T}$ is the Perron vector of $Q(G)$, where $x_{i}$ corresponds to the vertex $v_{i}$ ($1 \leq i \leq n$). Let $G^{'}$ be the  graph obtained from $G$ by deleting the edges $vv_{i}$ and adding the edges $uv_{i}$ ($1 \leq i \leq n$). If $ x_{u} \geq x_{v}$, then $ q_{1}(G) < q_{1}(G^{'}) $.
\end{lem}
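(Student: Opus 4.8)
The plan is to prove the lemma via the Rayleigh-quotient characterisation of the $Q$-index, exploiting the strict positivity of the Perron vector of a connected graph. Recall the standard identity that for every real vector $y=(y_1,\dots,y_n)^T$,
\[
y^{T}Q(G)y=\sum_{ab\in E(G)}(y_a+y_b)^2,
\]
together with $q_1(G)=\max_{y\neq 0}\big(y^{T}Q(G)y\big)/\big(y^{T}y\big)$. Since $G$ is connected, $Q(G)$ is irreducible and nonnegative, so by the Perron--Frobenius theorem its Perron vector $x$ has all coordinates strictly positive; I normalise it so that $x^{T}x=1$, giving $q_1(G)=x^{T}Q(G)x$. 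Note also that the hypothesis $v_i\in N_G(v)\setminus N_G(u)$ guarantees each edge $uv_i$ created in $G'$ is genuinely new, while each edge $vv_i$ removed is genuinely present.

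First I would compute the effect of the edge shift on the quadratic form. Passing from $G$ to $G'$ deletes the $s$ edges $vv_i$ and inserts the $s$ edges $uv_i$ ($1\le i\le s$), leaving all remaining edges unchanged. Using the identity above termwise,
\[
x^{T}Q(G')x-x^{T}Q(G)x=\sum_{i=1}^{s}\big[(x_u+x_{v_i})^2-(x_v+x_{v_i})^2\big]=(x_u-x_v)\sum_{i=1}^{s}\big(x_u+x_v+2x_{v_i}\big).
\]
Every coordinate of $x$ is positive, so the trailing sum is strictly positive; combined with the hypothesis $x_u\ge x_v$ this yields $x^{T}Q(G')x\ge x^{T}Q(G)x$. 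Plugging $x$ into the Rayleigh quotient for $G'$ then gives the weak inequality
\[
q_1(G')\ge x^{T}Q(G')x\ge x^{T}Q(G)x=q_1(G).
\]

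The hard part will be upgrading this to the \emph{strict} inequality claimed, because when $x_u=x_v$ the displayed difference collapses to zero and the Rayleigh estimate alone yields only $q_1(G')\ge q_1(G)$. To close this gap I would argue by contradiction: assume $q_1(G')=q_1(G)$. Then both inequalities in the chain above are equalities, so $x$ realises the maximum of the Rayleigh quotient of $Q(G')$ and is therefore an eigenvector of $Q(G')$ for $q_1(G')$. Hence $Q(G')x=q_1(G')x=q_1(G)x=Q(G)x$, i.e.\ $\big(Q(G')-Q(G)\big)x=0$. But inspecting the coordinate at $u$, whose degree rises by $s$ and which acquires the new neighbours $v_1,\dots,v_s$, gives
\[
\big[(Q(G')-Q(G))x\big]_u=s\,x_u+\sum_{i=1}^{s}x_{v_i}>0,
\]
contradicting $\big(Q(G')-Q(G)\big)x=0$. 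Therefore $q_1(G')=q_1(G)$ cannot occur, and we conclude $q_1(G)<q_1(G')$. The only point needing care is the justification that equality in the first Rayleigh step forces $x$ to be a $q_1(G')$-eigenvector, which follows from the variational characterisation of the largest eigenvalue of a symmetric matrix and requires no connectivity assumption on $G'$.
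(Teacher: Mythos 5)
Your proof is correct, and since the paper quotes this lemma from Hong and Zhang \cite{Y.H} without reproducing a proof, there is nothing in-paper to diverge from: the Rayleigh-quotient computation $x^{T}Q(G')x - x^{T}Q(G)x = (x_u-x_v)\sum_{i=1}^{s}(x_u+x_v+2x_{v_i}) \ge 0$, followed by the equality-case argument that $q_1(G')=q_1(G)$ would force $x$ to be a $q_1(G')$-eigenvector of $Q(G')$ and then contradicting $(Q(G')-Q(G))x=0$ at the coordinate $u$ (where it equals $s\,x_u+\sum_i x_{v_i}>0$), is exactly the standard proof of such edge-rotation lemmas and is the argument in the cited source. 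The only tacit point is that the $v_i$ must be distinct from $u$ so that each $uv_i$ is a genuine new edge rather than a loop (the lemma's statement, which also misprints $1\le i\le n$ for $1\le i\le s$, leaves this implicit), and this does not affect your argument.
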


\section{Proof of Theorem \ref{thm::1.5}}
Let $G^{*}$ be the extremal graph with the maximum $Q$-index among all $n$-vertex connected $K_{1,t}$-minor free graphs, where $t\geq3$ and $n \geq t+1$.
Denote by $q^{*}=q_{1}(G^{*})$ and $x=(x_{1},x_{2}, \dots,x_{n})^{T}$  the Perron vector of $Q(G^{*})$ corresponding to $q^{*}$. Furthermore, assume that $u^{*} \in V(G^{*})$ with $x_{u^{*}}=\max\limits_{u \in V(G^{*}) }x_{u}$, and let $A=N_{G^{*}}(u^{*})$, $B=V(G^{*}) \backslash (A \cup \{u^{*}\})$, $N^{2}(u^{*})=\{d_{G^{*}}(w,u^{*})=2 \mid w \in B\}$.

\begin{lem}\label{lem::3.1}
$ 2t - 3 < q^{*} \leq 2t - 2 $. Moreover, if $n\geq t+2$, then $q^{*}>2t - 2 - \frac{2}{t - 1}\ge 2t - 3$.
\end{lem}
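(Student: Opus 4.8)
\emph{The plan} is to obtain the two sides of the inequality by independent elementary arguments and then to sharpen the lower bound for $n\ge t+2$ by comparing $S^{n-t}(K_{t})$ with a suitable subgraph. First I would settle the upper bound. Since $G^{*}$ is $K_{1,t}$-minor free, it can have no vertex of degree at least $t$: such a vertex together with $t$ of its neighbours would already be a $K_{1,t}$ subgraph, hence a $K_{1,t}$ minor. Thus $\Delta(G^{*})\le t-1$, and Lemma~\ref{lem::2.1} yields $q^{*}=q_{1}(G^{*})\le 2\Delta(G^{*})\le 2t-2$. By the equality case of Lemma~\ref{lem::2.1}, $q^{*}=2t-2$ would force $G^{*}$ to be connected and $(t-1)$-regular, which is what one expects only when $n=t+1$.

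Next I would prove the crude lower bound $q^{*}>2t-3$, valid for every $n\ge t+1$. Because $S^{n-t}(K_{t})$ is a connected $K_{1,t}$-minor free graph on $n$ vertices, it is one of the graphs over which $q^{*}$ is maximised, so $q^{*}\ge q_{1}(S^{n-t}(K_{t}))$. The induced subgraph of $S^{n-t}(K_{t})$ on the $t$ vertices of the original $K_{t}$ is exactly $K_{t}-e$, the subdivided edge being the missing one; since $n\ge t+1$ this is a \emph{proper} subgraph of the connected graph $S^{n-t}(K_{t})$, so Lemma~\ref{lem::2.2} gives $q_{1}(S^{n-t}(K_{t}))>q_{1}(K_{t}-e)$. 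By Lemma~\ref{lem::2.3}, $q_{1}(K_{t}-e)=\frac{3}{2}t-3+\frac{\sqrt{t^{2}+4t-12}}{2}$, and the elementary inequality $\sqrt{t^{2}+4t-12}\ge t$ (equivalent to $t\ge3$) shows $q_{1}(K_{t}-e)\ge 2t-3$, with equality only at $t=3$. Hence $q^{*}>q_{1}(K_{t}-e)\ge 2t-3$.

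For the sharper bound when $n\ge t+2$ I would exploit that the subdivided path now has at least two interior vertices, so the two ends $v_{1},v_{2}$ of the subdivided edge each keep a private path-neighbour. Retaining $K_{t}-e$ together with these two attaching edges and discarding the remaining interior path vertices embeds the graph $G^{e}_{t}$ of Lemma~\ref{lem::2.4} (with its parameter taken to be $t$) as a proper subgraph of $S^{n-t}(K_{t})$; by Lemma~\ref{lem::2.2}, $q^{*}\ge q_{1}(S^{n-t}(K_{t}))>q_{1}(G^{e}_{t})$. By Lemma~\ref{lem::2.4}, $q_{1}(G^{e}_{t})$ is the largest real root of $f(x)=x^{3}-(3t-4)x^{2}+(t-2)(2t-1)x-2(t-2)(t-3)$. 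Setting $\alpha=2t-2-\frac{2}{t-1}=\frac{2t(t-2)}{t-1}$, it suffices to show $f(\alpha)<0$: since $f$ is monic and $f(x)\to+\infty$, this forces the largest root to exceed $\alpha$, giving $q_{1}(G^{e}_{t})>\alpha$. Finally $\alpha\ge 2t-3$ precisely because $t\ge3$, which links the two assertions.

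\emph{The main obstacle} is the single inequality $f(\alpha)<0$. With the substitution $u=t-1\geq2$ one has $\alpha=2u-\frac{2}{u}$, and clearing denominators reduces the claim to $u^{3}f(\alpha)=-10u^{3}+14u^{2}+4u-8<0$ for $u\ge2$; this cubic equals $-24$ at $u=2$ and is strictly decreasing thereafter, so the inequality holds throughout. Everything else is bookkeeping: verifying that each claimed copy of $K_{t}-e$ and of $G^{e}_{t}$ is genuinely a proper subgraph, so that the strict inequality of Lemma~\ref{lem::2.2} applies, and that $S^{n-t}(K_{t})$ is connected and $K_{1,t}$-minor free, so that it is a legitimate competitor for $q^{*}$.
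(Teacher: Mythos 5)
Your proposal is correct and takes essentially the same route as the paper: the upper bound from $\Delta(G^{*})\le t-1$ via Lemma~\ref{lem::2.1}, the bound $q^{*}>2t-3$ by locating $K_{t}-e$ as a proper subgraph of $S^{n-t}(K_{t})$ and invoking Lemmas~\ref{lem::2.2} and~\ref{lem::2.3}, and the sharper bound for $n\ge t+2$ by embedding $G^{e}_{t}$ and checking that the cubic of Lemma~\ref{lem::2.4} is negative at $2t-2-\frac{2}{t-1}$. Your substitution $u=t-1$ and monotonicity argument for $-10u^{3}+14u^{2}+4u-8$ is just a cleaner packaging of the identical computation the paper carries out with the expression $-10+\frac{14}{t-1}+\frac{4}{(t-1)^{2}}-\frac{8}{(t-1)^{3}}<0$.
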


\begin{proof}Since $G^{*} $ is $K_{1,t}$-minor free,
we have $ \Delta ( G^{*} ) \leq t-1 $.  By Lemma \ref{lem::2.1}, we have $q^{*} \leq 2t-2$.
Notice that $t\geq 3$,  from Lemma \ref{lem::2.3}  we have
\begin{equation*}
q_{1}(K_{t}-e)=\frac{3}{2}t-3+\frac{\sqrt{t^{2}+4t-12}}{2}\geq 2t - 3.
\end{equation*}
Note that $ S^{n-t} (K_{t}) $ contains $ K_{t}-e $ as a proper subgraph. By Lemma \ref{lem::2.2}, we have
$$ q^{*} \geq q_{1}(S^{n-t} (K_{t})) > q_{1}( K_{t}-e) \geq 2t-3.$$

Next we consider the case of $n \geq t + 2 $.
By Lemma \ref{lem::2.4}, $ q_{1}( G^{e}_{t}) $ is the largest real root of the equation $ g(x)=0 $, where
$$ g(x)= x^{3} - (3t - 4) x^{2} + (t - 2)(2t - 1) x - 2(t-2)(t-3).$$
For $t \geq 3$, we have
\begin{equation*}
\begin{aligned}
g(2t-2-\frac{2}{t-1}) =& (2t-2-\frac{2}{t-1})^{3} - (3t - 4) (2t-2-\frac{2}{t-1})^{2}\\
&+(t - 2)(2t - 1)(2t-2-\frac{2}{t-1}) - 2(t-2)(t-3)\\
=&(2t-2-\frac{2}{t-1})[(2t-2-\frac{2}{t-1})(-t-\frac{2}{t-1}+2)\\
&+(t - 2)(2t - 1)]- 2(t-2)(t-3)\\
=&(2t-2-\frac{2}{t-1})[t-\frac{2t}{t-1}+\frac{4}{(t-1)^{2}}-2]-2(t-2)(t-3)\\
=&-10+\frac{14}{t-1}+\frac{4}{(t-1)^{2}}-\frac{8}{(t-1)^{3}}\\<&0.
\end{aligned}
\end{equation*}
Thus
$q_{1}( G^{e}_{t})> 2t-2-\frac{2}{t-1} \geq 2t-3$.
Since $S^{n-t} (K_{t})$ contains $G^{e}_{t}$ as a proper subgraph, from Lemma \ref{lem::2.2}, we have
$$q^{*}\geq q_{1}(S^{n-t}(K_{t}))>q_{1}( G^{e}_{t})> 2t-2-\frac{2}{t-1} \geq 2t-3.$$
It completes the proof.	
\end{proof}

\begin{lem}\label{lem::3.2}
$ | A | = t - 1 $.
\end{lem}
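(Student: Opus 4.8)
The plan is to read off the degree of $u^*$ directly from the eigenvalue equation of $Q(G^*)$ at the coordinate $u^*$, exploiting that $x_{u^*}$ is the largest entry of the Perron vector. First I would write the $u^*$-th equation of $Q(G^*)x=q^*x$. Since $Q(G^*)=D(G^*)+A(G^*)$ and $A=N_{G^*}(u^*)$, this reads
\begin{equation*}
q^* x_{u^*}=d_{G^*}(u^*)\,x_{u^*}+\sum_{v\in A}x_v .
\end{equation*}

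Next, because $x_{u^*}=\max_{u\in V(G^*)}x_u$ and $|A|=d_{G^*}(u^*)$, every term of the sum obeys $x_v\le x_{u^*}$, so $\sum_{v\in A}x_v\le |A|\,x_{u^*}=d_{G^*}(u^*)\,x_{u^*}$. Substituting into the displayed identity and dividing by $x_{u^*}>0$ (the Perron vector of the connected graph $G^*$ is positive) yields $q^*\le 2d_{G^*}(u^*)=2|A|$.

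Now I would combine this with the two-sided estimate of Lemma \ref{lem::3.1}. On one hand, $G^*$ is $K_{1,t}$-minor free, so $\Delta(G^*)\le t-1$ and hence $|A|=d_{G^*}(u^*)\le t-1$. On the other hand, $q^*>2t-3$, so $2|A|\ge q^*>2t-3$, which gives $|A|>t-\frac{3}{2}$. Since $|A|$ is a nonnegative integer, this forces $|A|\ge t-1$, and therefore $|A|=t-1$.

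There is essentially no serious obstacle here: the whole argument rests on the standard device of evaluating the eigenequation at a dominant coordinate. The only point needing care is the integrality step — the lower bound $q^*>2t-3$ must be \emph{strict}, since that is exactly what lifts the real inequality $d_{G^*}(u^*)>t-\frac{3}{2}$ up to $d_{G^*}(u^*)\ge t-1$; a non-strict bound would leave open the possibility $|A|=t-2$. This is precisely why Lemma \ref{lem::3.1} was formulated with a strict inequality on the left.
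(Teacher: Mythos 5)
Your proof is correct and is essentially the paper's own argument: the paper also evaluates the eigenequation of $Q(G^*)$ at the dominant coordinate $u^*$ to get $q^*\le 2|A|$, and merely phrases the conclusion as a contradiction (assuming $|A|\le t-2$ forces $q^*\le 2t-4<2t-3$) rather than via your direct integrality step. One small aside: your closing remark that strictness of $q^*>2t-3$ is essential is not quite right --- even $q^*\ge 2t-3$ gives $|A|\ge t-\frac{3}{2}$, and since $t-\frac{3}{2}$ is never an integer this already rules out $|A|=t-2$; but this does not affect the validity of your proof.
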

\begin{proof}
Since $ G^{*}$ is $K_{1,t}$-minor free, we have $ |A | \leq \Delta ( G^{*} ) \leq t-1 $ . Assume for a contradiction that $ | A | \leq t-2 $. Then we have
\begin{equation*}
q^{*} \cdot x_{u^{*}} = | A | \cdot x_{u^{*}} + \sum\limits_{v \in A } x_{v} \leq 2 | A | \cdot  x_{u^{*}}.
\end{equation*}
So,  $ q^{*} \leq 2 | A | \leq 2t - 4$, which  contradicts with Lemma \ref{lem::3.1}.
\end{proof}

By the proof of Lemma \ref{lem::3.2}, we know that  $\Delta(G^{*})=t-1$. Note that $d_{A}(v) \leq t-2$ for any $v \in A $. In the following, we denote
$A_{0} = \{ v \in A \mid d_{A}(v) = t - 2 \}$ and $ A_{1} = \{ v \in A \mid d_{A}(v) \leq t - 3 \}=A\backslash A_0$.

\begin{lem}\label{lem::3.3}
\emph{(i)} $ d_{B} (v) = 0$ for any $ v \in A_{0} $.

\emph{(ii)} $ d_{B} (v) \leq 1$ for any $ v \in A_{1} $.

\emph{(iii)} $ d_{B} (w) \leq 2$ for any $ w \in B $. Particularly, $ d_{B} (w) \leq 1$ for any $ w \in N^{2} (u^{*}) $.
\end{lem}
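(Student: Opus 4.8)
The plan is to extract from the hypothesis a single combinatorial lever and then read off all three parts from it. The lever is: for every connected subgraph $S$ of $G^*$ one has $|N(S)\setminus S|\le t-1$, since otherwise contracting $S$ to one vertex and keeping $t$ of its external neighbours as leaves would display $K_{1,t}$ as a minor, contradicting that $G^*$ is $K_{1,t}$-minor free. Alongside this I would use the degree identities $d(v)=1+d_A(v)+d_B(v)$ for $v\in A$ (the $+1$ being the edge $vu^*$) and $d(w)=d_A(w)+d_B(w)$ for $w\in B$, together with $\Delta(G^*)=t-1$. Part (i) is then immediate from degrees alone: if $v\in A_0$ then $d_A(v)=t-2$, so $d(v)=t-1+d_B(v)\le t-1$ forces $d_B(v)=0$.

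For part (ii) I would apply the lever to $S=\{u^*,v\}$, which is connected because $v\in A=N(u^*)$. Since $N(u^*)=A$ and $N(v)\subseteq\{u^*\}\cup A\cup B$, the external neighbourhood is the disjoint union $(A\setminus\{v\})\cup(N(v)\cap B)$, of size $(t-2)+d_B(v)$; hence $|N(S)\setminus S|\le t-1$ gives $d_B(v)\le1$ for every $v\in A$, in particular for $v\in A_1$.

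Part (iii) is where the work lies, and here I would take the connecting subgraph to be a shortest path rather than an ad hoc set. Given $w\in B$, choose a shortest path $P:u^*=p_0,p_1,\dots,p_k=w$ from $u^*$ to $w$ with $k\ge2$ and put $S=V(P)$. Minimality of $P$ gives two facts: its only vertex in $A$ is $p_1$ (all later vertices lie at distance $\ge2$ from $u^*$, hence in $B$), so $A\setminus\{p_1\}$ supplies $t-2$ external neighbours; and $w$'s unique neighbour on $P$ is $p_{k-1}$, so at most one vertex of $N(w)\cap B$ is absorbed into $S$. Since the $A$-part and the $B$-part of $N(S)\setminus S$ are disjoint, the lever yields $(t-2)+(d_B(w)-1)\le t-1$, i.e. $d_B(w)\le2$. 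When $k=2$, that is, $w\in N^2(u^*)$, the neighbour $p_{k-1}=p_1$ lies in $A$ rather than $B$, so no $B$-neighbour of $w$ is absorbed and the sharper count $(t-2)+d_B(w)\le t-1$ forces $d_B(w)\le1$.

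The step I expect to be the crux is the choice of $S$ in (iii): one needs a connected set that meets $A$ in as few vertices as possible, so that the full slack of $t-2$ from $N(u^*)$ is available, while capturing essentially none of $w$'s own $B$-neighbours. The shortest-path choice is precisely what delivers both, through the two minimality observations above. The rest is bookkeeping — disjointness of the $A$- and $B$-contributions (clear since $A\cap B=\emptyset$) and the separate treatment of the boundary case $k=2$, which is exactly what upgrades the bound on $N^2(u^*)$ from $2$ to $1$.
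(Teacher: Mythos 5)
Your proposal is correct and takes essentially the same route as the paper: your general lever (a connected subgraph $S$ contracts to show $|N(S)\setminus S|\le t-1$) is precisely the ad hoc minor-contraction the paper performs, with $S=\{u^*,v\}$ in (ii) being the paper's double star with non-pendant edge $u^*v$, and your shortest $u^*$--$w$ path in (iii) being the paper's shortest path from $w$ to $A$ extended by the edge $vu^*$. The counting is identical as well, including the observation that at most one $B$-neighbour of $w$ lies on the path (none when $w\in N^{2}(u^*)$), which yields $d_B(w)\le 2$ in general and $d_B(w)\le 1$ on $N^{2}(u^*)$.
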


\begin{proof}
(i) Clearly, $d_{B}(v)=0$ for any $v \in A_{0}$ since $\Delta(G^{*})=t-1$.
	
(ii) Suppose that there exists a vertex $v \in A_{1}$ such that $d_{B}(v) \geq 2$. Then $G^{*}$ contains a double star with the non-pendant edge $u^{*}v$ and $|A|-1+d_{B}(v)$ leaves. Notice that $|A|-1+d_{B}(v) \geq t$, which implies that $G^{*}$ contains a $K_{1,t}$-minor, a contradiction. Thus (ii) holds.
	
(iii) Note that $G^{*}$ is connected.  For any vertex $w \in B$, there exists a shortest path $P$ from $w$ to one of vertices in $A$. Clearly, $d_{B \cap V(P)}(w) \leq 1$. Moreover, $d_{B \cap V(P)}(w)=0$ if $w \in N^{2}(u^{*})$. Let $v \in A$ be the other endpoint of $P$. Then $G^{*}$ contains a tree consisting of a path $P$ and an edge $v u^{*}$ with $|A|-1+d_{B \backslash V(P)}(w)$ leaves. Since $G^{*}$ is $K_{1, t}$-minor free, we know that $|A|-1+d_{B \backslash V(P)}(w) \leq t-1$ and   $d_{B}(w) = d_{B \cap V(P)}(w)+d_{B \backslash V(P)}(w)$. It follows that
$ d_{B} (w) \leq 2$ for any $ w \in B $. Particularly, $ d_{B} (w) \leq 1$ if $ w \in N^{2} (u^{*})$. Thus (iii) holds.
\end{proof}

\begin{lem}\label{lem::3.4}
If $n=t+1$, suppose that $B=\{w\}$, then we have

\emph{(i)} $N_{G^*}(w)=A_{1}$;

\emph{(ii)} $d_{A}(v)=t-3$ for any $v\in A_{1}$.
\end{lem}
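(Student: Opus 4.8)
The plan is to exploit the special feature of the regime $n=t+1$: a graph on $t+1$ vertices is $K_{1,t}$-minor free exactly when its maximum degree is at most $t-1$, because a $K_{1,t}$-minor here must use all $t+1$ vertices as singleton branch sets, and is therefore a spanning star, i.e. a vertex of degree $t=n-1$. Consequently, \emph{any} modification of $G^{*}$ that keeps it connected with $\Delta\le t-1$ remains in the class, so by maximality of $q^{*}$ no such modification may raise the $Q$-index. I would combine this with the edge-insertion monotonicity (Lemma \ref{lem::2.2}) and the Perron shifting inequality (Lemma \ref{lem::2.6}).

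For (i), I first observe $N_{G^{*}}(w)\subseteq A_{1}$: the vertex $w$ is non-adjacent to $u^{*}$ by the definition of $B$, and non-adjacent to every vertex of $A_{0}$ by Lemma \ref{lem::3.3}(i). For the reverse containment I argue by contradiction: if some $v\in A_{1}$ satisfies $vw\notin E(G^{*})$, then $N_{G^{*}}(w)\subseteq A_{1}\setminus\{v\}$, whence $d(w)\le |A_{1}|-1\le t-2$, while $d(v)=1+d_{A}(v)\le t-2$ as well. Thus both $v$ and $w$ are non-saturated, and inserting the edge $vw$ keeps the graph connected with maximum degree at most $t-1$ while strictly increasing $q_{1}$ by Lemma \ref{lem::2.2} --- contradicting the maximality of $q^{*}$. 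Hence $A_{1}\subseteq N_{G^{*}}(w)$ and $N_{G^{*}}(w)=A_{1}$.

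For (ii), part (i) gives $d(v)=d_{A}(v)+2$ for every $v\in A_{1}$ (its neighbours are $u^{*}$, $w$ and $d_{A}(v)$ vertices of $A$). Suppose some $v\in A_{1}$ has $d_{A}(v)\le t-4$; then $d(v)\le t-2$ and $v$ has a non-neighbour $v'$ in $A$. The same edge-insertion argument as in (i) forces $d(v')=t-1$ (otherwise add $vv'$). A saturated vertex of $A$ cannot lie in $A_{0}$, since an $A_{0}$-vertex is adjacent to all of $A$ and in particular to $v$; hence $v'\in A_{1}$ with $d_{A}(v')=t-3$, so $v'$ misses only $v$ within $A$ and is adjacent to $u^{*}$, to $w$, and to every vertex of $A\setminus\{v,v'\}$. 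I would then split on the comparison of $x_{v}$ and $x_{w}$. If $x_{v}\ge x_{w}$, then $v'\in N_{G^{*}}(w)\setminus N_{G^{*}}(v)$, so shifting the edge $wv'$ to $vv'$ is admissible and Lemma \ref{lem::2.6} produces a connected $K_{1,t}$-minor free graph of strictly larger $Q$-index, a contradiction. If $x_{v}<x_{w}$, I write the eigen-equation of $Q(G^{*})$ at $v'$ and substitute $\sum_{y\in A}x_{y}=q^{*}x_{u^{*}}$ (the eigen-equation at $u^{*}$) to obtain
\begin{equation*}
(q^{*}+1)x_{v'}=(q^{*}+1)x_{u^{*}}+(x_{w}-x_{v}),
\end{equation*}
so that $x_{v'}=x_{u^{*}}+\frac{x_{w}-x_{v}}{q^{*}+1}>x_{u^{*}}$, contradicting the maximality of $x_{u^{*}}$. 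Either way a contradiction arises, so $d_{A}(v)=t-3$ for every $v\in A_{1}$.

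The bookkeeping (degrees, connectivity, and membership in the $K_{1,t}$-minor free class after each move) is routine. The genuine obstacle is the branch $x_{v}<x_{w}$ of (ii): when $A_{0}=\emptyset$ the vertex $w$ is already saturated, so there is no room to shift an edge onto $w$ and no purely combinatorial move is available. The point that rescues the argument is the exact eigen-equation at the ``twin'' $v'$, which forces its Perron weight strictly above the supposed maximum $x_{u^{*}}$; spotting this identity, rather than hunting for another edge switch, is the crux of the proof.
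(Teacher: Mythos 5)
Your proof follows the paper's own argument in all essentials: part (i) is the same edge-insertion via Lemma \ref{lem::2.2} (you merely make explicit the degree bookkeeping, including the observation that on $n=t+1$ vertices a $K_{1,t}$-minor forces singleton branch sets, hence is equivalent to a vertex of degree $t$ --- a fact the paper uses implicitly), and in part (ii) your pair $(v,v')$ is the paper's $(v_0,v_1)$, with the same edge-addition forcing $d_A(v')=t-3$, the same Lemma \ref{lem::2.6} shift, and, underneath, the same eigen-equation identity. One concrete slip: the eigen-equation at $u^*$ gives $\sum_{y\in A}x_y=(q^*-t+1)x_{u^*}$, not $q^*x_{u^*}$, since $d(u^*)=t-1$; carried through, your display should read
\begin{equation*}
(q^{*}-t+2)\,x_{v'}=(q^{*}-t+2)\,x_{u^{*}}+(x_{w}-x_{v}),
\end{equation*}
which is exactly the paper's identity $x_{w}=(q^{*}-t+2)(x_{v_1}-x_{u^{*}})+x_{v_0}$ rearranged. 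Your conclusion $x_{v'}>x_{u^*}$ survives, but not for the trivial reason your version of the constant suggests: positivity of the coefficient is $q^{*}>t-2$, which genuinely requires Lemma \ref{lem::3.1}, whereas ``$q^{*}+1$'' makes that dependence invisible. Finally, your case split on $x_v$ versus $x_w$ is redundant: the corrected identity together with $x_{u^*}\geq x_{v'}$ yields $x_w\leq x_v$ unconditionally, so the edge shift of your first case always applies --- which is precisely how the paper organizes the step (derive $x_w\leq x_{v_0}$, then shift $wv_1$ to $v_0v_1$). What you single out as the crux, the eigen-equation at the ``twin'' $v'$, is indeed the paper's key step too, just read by you in the contrapositive.
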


\begin{proof}
(i) If there exists a vertex $v \in A_{1}$ with $v w \notin E(G^{*})$, then
$\max \{d_{G^{*}}(v), d_{G^{*}}(w)\} \leq t-2,$
and thus $\Delta(G^{*}+v w)=\Delta(G^{*})=t-1$, which implies $G^{*}+vw$ contains no $K_{1, t}$-minor.
By Lemma \ref{lem::2.2}, we have $q_{1}(G^{*}+v w) > q^{*}$,  a contradiction. Thus (i) holds.

(ii) Recall that $A_{1}=\{v \in A \mid d_{A}(v) \leq t-3\}$. If there exists a vertex $v_{0} \in A_{1}$ such that $d_{A}(v_{0}) \leq t-4$. Take a vertex $v_{1} \in \overline{N}_{A}(v_{0})$, then $d_{A}(v_{1}) \leq t-3$, and by (i) $v_{1} \in A_{1}=N_{G^{*}}(w)$. In fact, $d_{A}(v_{1})=t-3$, otherwise adding an edge to $v_{0}$ and $v_{1}$ which leads to a $K_{1,t}$-minor free graph with larger $Q$-index, a contradiction. Thus we have
\begin{equation*}\begin{array}{ll}
q^{*} \cdot x_{u^{*}} = d(u^{*}) \cdot x_{u^{*}} + \sum\limits_{v \in A } x_{v} = (t-1) \cdot x_{u^{*}} + \sum\limits_{v \in A } x_{v},\\
q^{*} \cdot x_{v_{1}} = d(v_{1}) \cdot x_{v_{1}} + \sum\limits_{v \in N(v_{1}) } x_{v} = (t-1) \cdot x_{v_{1}} +x_{u^{*}} + x_{w} +\sum\limits_{v \in A \backslash \{v_{0},v_{1}\}} x_{v},
\end{array}
\end{equation*}
and then we obtain
\begin{equation*}
x_{w} =(q^{*}-t+2)  (x_{v_{1}} -x_{u^{*}} ) +  x_{v_{0}}.
\end{equation*}
By Lemma \ref{lem::3.1}, $q^{*} -t +2 > 0$.
Thus $x_{w} \leq x_{v_{0}}$ due to $x_{u^{*}} \geq x_{v_{1}}$.
Let $G^{'}=G^{*}-\{w v_{1}\}+\{v_{0} v_{1}\}$. Obviously, $G^{'}$ is still a connected graph since $v_{0} \in A_{1}=N_{G^{*}}(w)$, and $G^{'}$ is a $K_{1,t}$-minor free graph. By Lemma \ref{lem::2.6}, we have $q_{1}(G^{'}) > q^{*}$, which contradicts the maximality of $q^{*}$. It follows (ii).
\end{proof}

We firstly give the proof of Theorem \ref{thm::1.5} for $n=t+1$.
\renewcommand\proofname{\bf Proof of Theorem \ref{thm::1.5} for $n=t+1$.}
\begin{proof}
From Lemma \ref{lem::3.4}, $d_{A}(v)=t-3$ for any $v\in A_{1}$ and $A_{0}\cup\{w, u^*\}$ dominates $A_1$, we get that $G^{*}[A_{1}]$ is isomorphic to $K_{|A_{1}|}$ by deleting a perfect matching $M$. Thus $|A_{1}|$ is even.

By Lemma \ref{lem::2.1}, we know that $q_{1}(G) \leq 2\Delta(G)$ for any connected graph $G$, with equality if and only if $G$ is $\Delta$-regular. Noting that $G^{*}$ is $\Delta(G^{*})$-regular if and only if $A_{1}=A$ by Lemma \ref{lem::3.2}. Therefore, if $n=t+1$ is even, then $G^{*} \cong K_{n} \backslash \frac{n}{2} K_{1,1}$, where the $\frac{n}{2}$ independent edges deleting from $K_{n}$ are the union of the edge $u^{*} w$ with a perfect matching $M$ in $A$.

If $n=t+1$ is odd, then $|A|=t-1$ is also odd. Since $|A_{1}|$ is even, we have $|A_{1}|<|A|$. By symmetry and the Perron-Frobenius theorem, all vertices of $G^{*}$ in $A_{0}\cup \{u^{*}\}$ or $A_{1}$  have the same eigenvector components, which are denoted by $x_{u^{*}}$ and $x_{1}$, respectively. Thus we have
\begin{equation*}\left\{\begin{array}{ll}
q^{*} \cdot x_{w}=|A_{1}| \cdot x_{w} + |A_{1}| \cdot x_{1},\\
q^{*} \cdot x_{u^{*}}= (t-1)\cdot x_{u^{*}} + |A_{1}|\cdot x_{1}+|A_{0}| \cdot x_{u^{*}},\\
q^{*} \cdot x_{1}=(t-1)\cdot x_{1} + (|A_{1}|-2) \cdot x_{1}+(|A_{0}|+1) \cdot x_{u^{*}}+x_{w}.
\end{array}\right.
\end{equation*}
Noting that $|A_{0}|+|A_{1}|=|A|=t-1$. By above equations, we get
\begin{equation}\label{equ::3}
q^{*3}+(5-3t-|A_{1}|) q^{*2}+(3|A_{1}|t-8t-5|A_{1}|+2t^{2}+6)q^{*} + c=0.
\end{equation}
where $c=-2|A_{1}|^{2}-(2t^{2}-10t+8)|A_{1}|$. Since $|A_{1}|$ is even and $n=t+1$ is odd, we have $2 \leq |A_{1}| \leq t-2$ and $t \geq 4$, and thus $2t^{2}-10t+8 \geq 0$. Therefore the function
$f(x) = -2x^{2}-(2t^{2}-10t+8)x$
is decreasing for $2 \leq x \leq t-2$, which implies that $\min\limits_{2 \leq x \leq t-2} f(x)=f(t-2)$.
From (\ref{equ::3}), we have
$$c=-q^{*3}-(5-3t-|A_{1}|) q^{*2}-(3|A_{1}|t-8t-5|A_{1}|+2t^{2}+6)q^{*}.$$
Moreover, let
\begin{equation}\label{equ::4}
c(q)=-q^{3}-(5-3t-|A_{1}|) q^{2}-(3|A_{1}|t-8t-5|A_{1}|+2t^{2}+6)q.
\end{equation}
Then
$$c^{'}(q)=-3q^{2}-2(5-3t-|A_{1}|) q-(3|A_{1}|t-8t-5|A_{1}|+2t^{2}+6).$$
Since
\begin{equation*}
\begin{aligned}
r=& [-2(5-3t-|A_{1}|)]^{2}-4\times3\times(3|A_{1}|t-8t-5|A_{1}|+2t^{2}+6)\\
=&28-24t+20|A_{1}|+12t^{2}-12|A_{1}|t+4|A_{1}|^{2}\\
\geq&28-24t+20\times2+12t^{2}-12(t-2)t+4\times2^{2}\\
>&0,
\end{aligned}
\end{equation*}
by solving the equation $c^{'}(q_0)=0$, we have
$$q_0=\left\lbrace \frac{2(5-3t-|A_{1}|)+\sqrt{r}}{-6}, \frac{2(5-3t-|A_{1}|)-\sqrt{r}}{-6}\right\rbrace. $$
Now, we see that
\begin{equation*}
\begin{aligned}
&\max q_0-(2t-3)\\
=& \max \left\lbrace \frac{2(5-3t-|A_{1}|)+\sqrt{r}}{-6}, \frac{2(5-3t-|A_{1}|)-\sqrt{r}}{-6}\right\rbrace -(2t-3)\\
=&\frac{2(3t+|A_{1}|-5)+\sqrt{28-24t+20|A_{1}|+12t^{2}-12|A_{1}|t+4|A_{1}|^{2}}}{6} -(2t-3) \\
\leq&\frac{2(3t+(t-2)-5)+\sqrt{28-24t+20(t-2)+12t^{2}-12\times2t+4(t-2)^{2}}}{6} -(2t-3)\\
=&\frac{(8t-14)+\sqrt{(4t-5)^{2}-4t-21}}{6} -(2t-3)\\
<&\frac{(8t-14)+(4t-5)}{6} -(2t-3)\\
<&0.
\end{aligned}
\end{equation*}
Thus $c(q)$ in (\ref{equ::4}) is decreasing for $q > 2t-3$. Since $q^{*}$ is the maximum $Q$-index and $q^{*} > 2t-3$ by Lemma \ref{lem::3.1}, we get that $c$ in   (\ref{equ::3}) must attain its minimum value. Therefore, $|A_{1}|=t-2$, and  $|A_{0}|=1$, and so  $G^{*} \cong K_{n} \backslash \frac{n-1}{2} K_{1,1}$ for odd $n$.
It completes the proof.
\end{proof}

In order to give the proof of Theorem \ref{thm::1.5} for $n\geq t+2$,
we let $z^{*}\in V(G^{*})$ such that
\begin{equation*}
d(z^{*})+\frac{1}{d(z^{*})}\sum_{v\in N(z^{*})}d(v)=\max\{d(u)+\frac{1}{d(u)}\sum_{v\in N(u)}d(v): u\in V(G)\}.
\end{equation*}
From Lemma \ref{lem::2.5}, we have
\begin{equation}\label{equ::5}
q^{*} \leq d(z^{*})+\frac{1}{d(z^{*})}\sum_{v\in N(z^{*})}d(v).
\end{equation}
\renewcommand\proofname{\bf Proof}
\begin{lem}\label{lem::3.5}
$d(z^{*})=t-1$.
\end{lem}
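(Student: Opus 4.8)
The plan is to combine the eigenvalue upper bound of Lemma~\ref{lem::2.5} (already recorded as (\ref{equ::5})) with the strict lower bound on $q^{*}$ coming from Lemma~\ref{lem::3.1}, exploiting the fact that $\Delta(G^{*})=t-1$. First I would recall that, since $G^{*}$ is $K_{1,t}$-minor free, every vertex satisfies $d(v)\le\Delta(G^{*})=t-1$, where the value $t-1$ is attained at $u^{*}$ by Lemma~\ref{lem::3.2}. In particular each of the $d(z^{*})$ neighbours of $z^{*}$ has degree at most $t-1$, so
\begin{equation*}
\frac{1}{d(z^{*})}\sum_{v\in N(z^{*})}d(v)\le \frac{1}{d(z^{*})}\cdot d(z^{*})\cdot(t-1)=t-1.
\end{equation*}
Substituting this into (\ref{equ::5}) yields $q^{*}\le d(z^{*})+(t-1)$.

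Next I would argue by contradiction. Suppose $d(z^{*})\le t-2$. Then the bound just obtained gives
\begin{equation*}
q^{*}\le d(z^{*})+(t-1)\le (t-2)+(t-1)=2t-3.
\end{equation*}
On the other hand, since we are in the case $n\ge t+2$, Lemma~\ref{lem::3.1} guarantees $q^{*}>2t-2-\frac{2}{t-1}\ge 2t-3$, that is, $q^{*}>2t-3$. These two estimates are incompatible, so the assumption $d(z^{*})\le t-2$ is impossible. Hence $d(z^{*})\ge t-1$, and combining this with $d(z^{*})\le\Delta(G^{*})=t-1$ forces $d(z^{*})=t-1$.

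The argument is short precisely because both inputs are already in place, so I do not expect a genuine obstacle here; the whole point is to feed the uniform degree ceiling $t-1$ into the mean-degree bound. The one step deserving care is the strictness in Lemma~\ref{lem::3.1}: one must verify that $2t-2-\frac{2}{t-1}\ge 2t-3$ for every $t\ge3$ (equivalently $t-1\ge 2$), since it is exactly this that makes the inequality $q^{*}>2t-3$ strict and lets it clash with $q^{*}\le 2t-3$. A minor secondary point is that the expression in (\ref{equ::5}) is well defined only when $d(z^{*})\ge1$; this is automatic because $G^{*}$ is connected on $n\ge t+2$ vertices and therefore has no isolated vertex.
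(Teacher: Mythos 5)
Your proof is correct and follows essentially the same route as the paper: combine the bound $q^{*}\le d(z^{*})+\Delta(G^{*})=d(z^{*})+t-1$ from Lemma~\ref{lem::2.5} via (\ref{equ::5}) with the strict lower bound $q^{*}>2t-3$ from Lemma~\ref{lem::3.1} to force $d(z^{*})>t-2$, then cap with $d(z^{*})\le\Delta(G^{*})=t-1$. The only cosmetic difference is that you invoke the $n\ge t+2$ refinement $q^{*}>2t-2-\frac{2}{t-1}$, whereas the paper's first, unconditional bound $q^{*}>2t-3$ already suffices.
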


\begin{proof}
By Lemma \ref{lem::3.2}, we know that $ d(z^{*}) \leq \Delta(G^{*}) = t - 1 $. On the other hand, from Lemma \ref{lem::3.1} and    (\ref{equ::5}), we have
\begin{equation*}
\begin{aligned}
2t-3 < q^{*} \leq& d(z^{*})+\frac{1}{d(z^{*})}\sum_{v\in N(z^{*})}d(v) \leq d(z^{*})+ \Delta(G^{*}) = d(z^{*}) +t-1,
\end{aligned}
\end{equation*}
which implies that $d(z^{*}) > t-2$, and thus $d(z^{*})=t-1$.
\end{proof}

\begin{lem}\label{lem-3.5'} If $n\geq t +2$, then $ |A_{1}| \geq |N^{2} (u^{*}) | \geq 2$.\end{lem}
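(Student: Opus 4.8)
The plan is to prove the two inequalities in turn: the bound $|A_{1}|\ge |N^{2}(u^{*})|$ by counting the edges between $A$ and $B$, and $|N^{2}(u^{*})|\ge 2$ by excluding the degenerate possibilities $|N^{2}(u^{*})|\le 1$.

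For the first inequality I would double count $e(A,B)$. By Lemma~\ref{lem::3.3}(i) each $v\in A_{0}$ has $d_{B}(v)=0$, and by Lemma~\ref{lem::3.3}(ii) each $v\in A_{1}$ has $d_{B}(v)\le 1$, so $e(A,B)=\sum_{v\in A_{1}}d_{B}(v)\le|A_{1}|$. On the other hand every $w\in N^{2}(u^{*})$ is at distance exactly $2$ from $u^{*}$, hence has at least one neighbour in $A$, so $|N^{2}(u^{*})|\le\sum_{w\in N^{2}(u^{*})}d_{A}(w)\le e(A,B)\le|A_{1}|$. For the lower bound, note that $n\ge t+2$ forces $|B|=n-t\ge 2$, so $B\ne\varnothing$; since $G^{*}$ is connected and no vertex of $B$ is adjacent to $u^{*}$, some vertex of $B$ meets $A$, giving $|N^{2}(u^{*})|\ge 1$.

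It then remains to rule out $|N^{2}(u^{*})|=1$; suppose $N^{2}(u^{*})=\{w\}$. Then $w$ is the unique gateway from $A$ to $B$, and as $|B|\ge 2$ connectivity forces $d_{B}(w)\ge 1$, while Lemma~\ref{lem::3.3}(iii) gives $d_{B}(w)\le 1$; hence $d_{B}(w)=1$. Moreover each vertex of $B\setminus\{w\}$ is adjacent neither to $u^{*}$ nor to $A$, so it has degree at most $2$, and therefore $G^{*}[B]$ is a pendant path $w\,w_{1}\,w_{2}\cdots w_{k}$ with $k=|B|-1\ge 1$. I would then split on whether $w$ sees all of $A_{1}$. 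If $A_{1}\subseteq N(w)$, pick any $a\in A_{1}$ (possible since $|A_{1}|\ge 1$) and set $S=\{a,w\}$, a connected subgraph. Its outside neighbours comprise $u^{*}$ (a neighbour of $a$), $w_{1}$ (the $B$-neighbour of $w$), every vertex of $A_{0}$ (each is adjacent to all of $A$, in particular to $a$), and every vertex of $A_{1}\setminus\{a\}$ (each adjacent to $w$); these are $1+1+|A_{0}|+(|A_{1}|-1)=|A_{0}|+|A_{1}|+1=t$ pairwise distinct vertices. Contracting $S$ then exhibits a $K_{1,t}$-minor, contradicting that $G^{*}$ is $K_{1,t}$-minor free. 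This case is clean and needs no eigenvector estimate.

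In the remaining case some $a_{0}\in A_{1}\setminus N(w)$ exists; since its only possible $B$-neighbour would lie in $N^{2}(u^{*})=\{w\}$, we get $d_{B}(a_{0})=0$ and $d(a_{0})=1+d_{A}(a_{0})\le t-2$, so $a_{0}$ has spare degree. The idea is to relocate the end $w_{k}$ of the pendant path onto $a_{0}$, i.e. to pass to $G'=G^{*}-w_{k-1}w_{k}+a_{0}w_{k}$ (with $w_{0}:=w$ when $k=1$); this creates a second gateway and, by Lemma~\ref{lem::2.6} applied with $u=a_{0}$ and $v=w_{k-1}$, strictly increases the $Q$-index provided $x_{a_{0}}\ge x_{w_{k-1}}$, contradicting the maximality of $q^{*}$. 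I expect the main obstacle to be precisely this last case: one must verify the Perron comparison $x_{a_{0}}\ge x_{w_{k-1}}$ between a neighbour of $u^{*}$ and a deep tail vertex (hardest when $k=1$, where $w_{k-1}=w$ sits at distance $2$ from $u^{*}$), and one must check that $G'$ stays $K_{1,t}$-minor free, which for large $t$ amounts to controlling the external-neighbour counts of the two-vertex sets touching $a_{0}$ (the single deleted edge $w_{k-1}w_{k}$ is what keeps these counts below $t$).
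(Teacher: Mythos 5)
Your first inequality (the double count giving $|A_{1}|\ge |N^{2}(u^{*})|$), the bound $|N^{2}(u^{*})|\ge 1$, the pendant-path structure of $G^{*}[B]$, and your case $A_{1}\subseteq N(w)$ are all correct; the last is essentially the paper's opening move (it contracts a pair $\{v_{0},w_{0}\}$ and counts $t+2$ vertices in $N(v_{0})\cup N(w_{0})$, which is your $t$ external neighbours). The genuine gap is the complementary case, where some $a_{0}\in A_{1}\setminus N(w)$ exists: everything there rests on the unverified Perron comparison $x_{a_{0}}\ge x_{w_{k-1}}$, and this comparison is actually \emph{false} in configurations you must rule out. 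Take $t=5$, $n=7$, $A=\{v_{1},v_{2},v_{3},a_{0}\}$, with $v_{1}v_{2}v_{3}$ a triangle, $w$ adjacent to $v_{1},v_{2},v_{3},w_{1}$, and $a_{0}$ adjacent only to $u^{*}$. One checks every connected set has at most $t-1=4$ external neighbours, so this graph is $K_{1,5}$-minor free with $k=1$ and $w_{k-1}=w$; but it admits the automorphism swapping $u^{*}\leftrightarrow w$ and $a_{0}\leftrightarrow w_{1}$, so $x_{w}=x_{u^{*}}$ while $x_{a_{0}}=x_{u^{*}}/(q-1)\ll x_{w}$. Your rotation via Lemma \ref{lem::2.6} therefore cannot fire here; the contradiction with extremality must come from a different move (e.g., adding the edge $a_{0}w_{1}$, which stays minor free and raises $q_{1}$ by Lemma \ref{lem::2.2}) --- and showing which move works in which configuration is precisely the hard content you left open. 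A secondary inaccuracy: certifying that $G'$ is $K_{1,t}$-minor free does not reduce to "external-neighbour counts of the two-vertex sets touching $a_{0}$"; a $K_{1,t}$-minor is exactly a connected vertex set with at least $t$ external neighbours, so arbitrary connected sets through the new edge must be controlled.

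For comparison, the paper never attempts an eigenvector comparison between $a_{0}$ and a path vertex. After your dichotomy it disposes of the cases $d(w_{0})=2$ and $t=4$ by adding a pendant edge so that the new graph is a subgraph of $S^{n-t}(K_{t})$ (Lemma \ref{lem::2.2}), and for $t\ge 5$ it brings in the vertex $z^{*}$ attaining the Merris bound of Lemma \ref{lem::2.5}, shows $d(z^{*})=t-1$ (Lemma \ref{lem::3.5}), and eliminates $z^{*}\in A_{0}\cup\{u^{*}\}$, $z^{*}\in A_{1}$ and $z^{*}\in B$ by pitting the sharpened lower bound $q^{*}>2t-2-\frac{2}{t-1}$ of Lemma \ref{lem::3.1} against degree-sum estimates. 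The only Perron comparison it uses is between $u^{*}$ and $z^{*}$, two vertices of \emph{equal} degree $t-1$ with nearly identical neighbourhoods, for which the eigen-equations yield the clean identity $(q^{*}-t+2)(x_{u^{*}}-x_{z^{*}})=x_{v_{1}}-x_{w_{0}}$; no such identity is available for your pair $(a_{0},w_{k-1})$, which is why your plan stalls exactly where you predicted it would.
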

\begin{proof}
We first show that $|N^{2}(u^{*})| \geq 2$.
Note that $G^{*}$ is connected. Since $|A|=t-1$ and   $n \geq t+2$,
we have $N^{2}(u^{*}) \neq \emptyset$ and $|B|\geq 2$.
Suppose by the contrary that $|N^{2}(u^{*})| = 1$.
Let $N^{2}(u^{*})=\{w_0\}$ and $w_{1}\in N_B(w_0)$, and thus $d(w_{0}) \geq 2$.
Taking a vertex $v_{0} \in N_{A}(w_{0})$.
If $\overline{N}_{A}(w_{0}) \subseteq N_{G^{*}}(v_{0})$,
then
$$|N_{G^{*}}(v_{0}) \cup N_{G^{*}}(w_{0})|=|A \cup\{u^{*}, w_{0}, w_{1}\}|=t+2,$$
which implies   $G^{*}$ contains a $K_{1,t}$-minor, a contradiction.
Thus there exists a vertex, say $v_{1}$, in
$\overline{N}_{A}(w_{0}) \cap \overline{N}_{G^{*}}(v_{0})$. Thus from Lemma \ref{lem::3.3} (ii), $d_{G^{*}}(v_{1}) \leq t-2$. By Lemma \ref{lem::3.3} (iii), $G^{*}[B]$ is a pendant path $P$ of order $n-t$ with the endpoint $w_{0}$, say $P=$ $w_{0} w_{1} \cdots w_{n-t-1}$. Suppose to that $d(w_{0})=2$,
then $N_{A}(w_{0})=\{v_{0}\}$.
Let $G^{'}=G^{*}+\{v_{1} w_{n-t+1}\}$.
Clearly, $\Delta(G^{'})=\Delta(G^{*})=t-1$ and $G^{'}$ is a subgraph of $S^{n-t}(K_{t})$, and so $G^{'}$ is $K_{1, t}$-minor free.
By Lemma \ref{lem::2.2}, we have $q_{1}(G^{'}) > q^{*}$, a contradiction.
So $d(w_{0})\geq 3$,  and thus $t\geq 4$.
Suppose that $t=4$, then $|A|=3$. Let $\{v_{2}\}=A \backslash \{v_{0},v_{1}\}$.
Then $N_A(w_0)=\{v_0,v_2\}$, and $v_{2}$ is adjacent to at most one of $\{v_{0},v_{1}\}$. Now let $G^{'}=G^{*}+\{v_{1} w_{n-t+1}\}$. Clearly, $G^{'}$ is $K_{1,t}$-minor free. By Lemma \ref{lem::2.2}, we have $q_{1}(G^{'}) > q^{*}$, also a contradiction. Therefore, $t\geq5$.
Now we consider the following three cases to lead a contradiction, respectively.

{\flushleft\bf Case 1. If $z^{*} \in A_{0}\cup \{u^{*}\}$.}

Notice that $z^{*}v_{1}\in E(G^*)$. We have $d_{G^{*}}(v_{1}) = t-2$ since  otherwise $d_{G^{*}}(v_{1}) \leq t-3$. By  Lemma \ref{lem::3.1} and (\ref{equ::5}), we have
\begin{equation*}
\begin{aligned}
2t - 2 - \frac{2}{t - 1} < q^{*} \leq& d(z^{*})+\frac{1}{d(z^{*})}\sum_{v\in N(z^{*})}d(v)\\\leq& d(z^{*}) + \frac{(d(z^{*})-1)\Delta(G^{*})+d_{G^{*}}(v_{1})}{d(z^{*})}\\\leq&t-1 + \frac{(t-2)(t-1)+t-3}{t-1}\\=&2t- 2 - \frac{2}{t - 1},
\end{aligned}
\end{equation*}
a contradiction. Thus $\overline{N}_{A_{1}}(v_{1})=\{v_{0}\}$.
Since $d(w_{0})\geq 3$, we may assume $v_{2} \in N_{A}(w_{0}) \backslash \{v_{0}\}$.
If $\overline{N}_{A}(w_{0}) \subseteq N_{G^{*}}(v_{2})$, then
$$|N_{G^{*}}(v_{2}) \cup N_{G^{*}}(w_{0})|=|A \cup\{u^{*}, w_{0}, w_{1}\}|=t+2,$$
which implies that $G^{*}$ contains a $K_{1,t}$-minor, a contradiction.
Thus there exists a vertex, say $v_{3} \in \overline{N}_{A}(w_{0}) \cap \overline{N}_{G^{*}}(v_{2})$, where $v_{3} \neq v_{1}$. Clearly, $d_{G^{*}}(v_{3})\leq t-2$. Notice that $z^{*}$ is adjacent to $v_{3}$. By (\ref{equ::5}) and Lemma \ref{lem::3.1}, we get
\begin{equation*}
\begin{aligned}
2t - 2 - \frac{2}{t - 1} < q^{*} \leq& d(z^{*})+\frac{1}{d(z^{*})}\sum_{v\in N(z^{*})}d(v)\\\leq& d(z^{*}) + \frac{(d(z^{*})-2)\Delta(G^{*})+d_{G^{*}}(v_{3})+d_{G^{*}}(v_{1})}{d(z^{*})}\\\leq&t-1 + \frac{(t-3)(t-1)+2(t-2)}{t-1}\\=&2t- 2 - \frac{2}{t - 1},
\end{aligned}
\end{equation*}
a contradiction.
{\flushleft\bf Case 2. If $z^{*} \in A_{1}$.}

Since $d_{G^{*}}(z^{*})=t-1$, we have $z^{*} \in N_{A}(w_{0})$. Without loss of generality, we may assume $z^{*} = v_{0}$, and so $d_{G^{*}}(v_{0})=t-1$ and $\overline{N}_{A_{1}}(v_{0})=\{v_{1}\}$. Furthermore,
$ d(w_{0})\geq t-2$. Otherwise, if $ d(w_{0}) \leq t-3$, by (\ref{equ::5}) and Lemma \ref{lem::3.1}, we have
\begin{equation*}
\begin{aligned}
2t - 2 - \frac{2}{t - 1} < q^{*} \leq& d(z^{*})+\frac{1}{d(z^{*})}\sum_{v\in N(z^{*})}d(v)\\\leq& d(z^{*}) + \frac{(d(z^{*})-1)\Delta(G^{*})+d_{G^{*}}(w_{0})}{d(z^{*})}\\\leq&t-1 + \frac{(t-2)(t-1)+t-3}{t-1}\\=&2t- 2 - \frac{2}{t - 1},
\end{aligned}
\end{equation*}
which is a contradiction. Now let $G^{'} = G - z^{*}w_{0} + z^{*}v_{1}$.
Since $d(w_{0})\geq t-2$ and $G^{*}$ is connected, we have $G^{'}$ is connected.
Note that $\Delta(G^{'})=\Delta(G^{*})=t-1$, and so $G^{'}$ contains no $K_{1,t}$-minor. On the other hand,
from the eigen-equations of $ Q(G^{*}) $,
\begin{equation*}
\begin{array}{ll}
q^{*}\cdot x_{u^{*}} = d(u^{*}) \cdot x_{u^{*}} + \sum\limits_{v \in A } x_{v} = (t-1) x_{u^{*}} +x_{z^{*}}+x_{v_1} + \sum\limits_{v \in A\backslash \{z^{*},v_{1}\} } x_{v},\\
q^{*}\cdot  x_{z^{*}} = d(z^{*}) \cdot x_{z^{*}} + \sum\limits_{v \in N(z^{*}) } x_{v} = (t-1)  x_{z^{*}} +x_{u^{*}} + x_{w_{0}} +\sum\limits_{v \in A\backslash \{z^{*},v_{1}\}} x_{v}.
\end{array}
\end{equation*}
Thus
$(q^{*}-t +2)(x_{u^{*}}-x_{z^{*}}) =x_{v_{1}}-x_{w_{0}}.$
Since $q^{*} -t +2 > 0$ by Lemma \ref{lem::3.1} and $x_{u^{*}} \geq x_{z^{*}}$,
we have $x_{v_{1}} \geq x_{w_{0}}$. By Lemma \ref{lem::2.6}, we get $ q_{1}(G^{'}) > q^{*}$, a contradiction.

{\flushleft\bf Case 3. If $z^{*} \in B$.}

Recall that $d_{G^{*}}(z^{*})=t-1$.
Since  $d(w_{i})\leq 2$ for $i=1,2,\cdots, n-t-1$,
we have $z^{*} = w_{0}$. By (\ref{equ::5}) and Lemma \ref{lem::3.1}, we get
\begin{equation*}
\begin{aligned}
2t - 2 - \frac{2}{t - 1} < q^{*} \leq& d(z^{*})+\frac{1}{d(z^{*})}\sum_{v\in N(z^{*})}d(v)\\\leq& d(z^{*}) + \frac{(d(z^{*})-1)\Delta(G^{*})+d_{G^{*}}(w_{1})}{d(z^{*})}\\\leq&t-1 + \frac{(t-2)(t-1)+2}{t-1}\\=&2t- 2 - \frac{t-3}{t - 1},
\end{aligned}
\end{equation*}
a contradiction.
Thus $|N^{2}(u^{*})| \geq 2$.
By Lemma \ref{lem::3.3} (ii), we have $|A_{1}| \geq|N^{2}(u^{*})|\ge 2$.
\end{proof}

\begin{lem}\label{lem::3.6}
$ d_{B} (v) = 1$ for any $ v \in A_{1} $.
\end{lem}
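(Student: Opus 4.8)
The goal is to upgrade Lemma~\ref{lem::3.3}(ii), which gives only $d_B(v)\le 1$ for $v\in A_1$, to the exact equality $d_B(v)=1$ for every $v\in A_1$. The plan is to argue by contradiction: suppose some vertex $v_0\in A_1$ has $d_B(v_0)=0$, and produce a $Q$-index-increasing modification that stays inside the class of $K_{1,t}$-minor free graphs, violating the maximality of $q^*$. The natural tool is the edge-rotation Lemma~\ref{lem::2.6}, which requires comparing Perron components $x_{u}\ge x_v$ before rotating edges from $v$ to $u$; alternatively one can simply \emph{add} an edge and invoke Lemma~\ref{lem::2.2} whenever the maximum degree does not rise above $t-1$.

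First I would exploit the structure already established. By Lemma~\ref{lem-3.5'} we have $|A_1|\ge |N^2(u^*)|\ge 2$, so $A_1$ is nonempty and $B\ne\emptyset$, and by Lemma~\ref{lem::3.3} every vertex of $A_1$ sends at most one edge into $B$. The key observation is that a vertex $v_0\in A_1$ with $d_B(v_0)=0$ has degree $d_{G^*}(v_0)=1+d_A(v_0)\le 1+(t-3)=t-2<\Delta(G^*)$, leaving a ``free slot.'' I would then locate a target edge to create: either an edge from $v_0$ to a vertex $w\in B$ whose current degree is below the cap, or an edge from $v_0$ to a non-neighbour in $A_1$. The essential point to verify is that adding such an edge keeps $\Delta=t-1$; since Lemma~\ref{lem::3.3}(iii) forces $d_B(w)\le 2$ and $B$ induces (as shown in the proof of Lemma~\ref{lem-3.5'}) paths, there is genuine slack to absorb one new edge. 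Whenever the new graph $G'$ has $\Delta(G')=t-1$ it is a subgraph of $S^{n-t}(K_t)$ (or otherwise manifestly $K_{1,t}$-minor free), so Lemma~\ref{lem::2.2} yields $q_1(G')>q^*$, the desired contradiction.

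If a pure edge-addition is blocked because every available target already sits at degree $t-1$, I would switch to the rotation argument: compare $x_{u^*}$ (the maximum component) with the component at the endpoint whose edge I wish to reroute, set up the two eigen-equations $q^* x_{u^*}=\dots$ and $q^* x_{v_0}=\dots$ exactly as in the proofs of Lemmas~\ref{lem::3.4} and~\ref{lem-3.5'}, subtract them to get an inequality of the form $(q^*-t+2)(x_{u^*}-x_{v_0})\ge 0$ using $q^*-t+2>0$ from Lemma~\ref{lem::3.1}, and conclude that the Perron components line up so that Lemma~\ref{lem::2.6} applies. I expect the main obstacle to be the bookkeeping of connectivity: after deleting an edge incident to $B$ I must ensure $G'$ stays connected (the pendant-path structure of $G^*[B]$ is what guarantees this), and I must check that no case leaves $\Delta(G')>t-1$. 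Handling the boundary situation where $B$ consists of short paths, together with the small-$t$ values, is likely where the delicate casework concentrates, mirroring the three-case split already used in Lemma~\ref{lem-3.5'}.
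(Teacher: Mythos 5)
Your proposal has a genuine gap at its load-bearing step: the claim that whenever $\Delta(G')=t-1$ the modified graph $G'$ is ``a subgraph of $S^{n-t}(K_t)$ (or otherwise manifestly $K_{1,t}$-minor free).'' For $n\ge t+2$ the bound $\Delta\le t-1$ is necessary but far from sufficient for $K_{1,t}$-minor freeness: a $K_{1,t}$-minor arises from any connected vertex set with at least $t$ neighbours outside it, for instance from contracting an edge $xy$ with $|N(x)\cup N(y)|\ge t+2$. This is exactly how Lemma \ref{lem::3.3}(ii)--(iii) were proved (the double star, the path-plus-star tree), and your proposed edge addition $v_0w$ with $w\in B$ can create precisely such a configuration while every vertex degree stays at most $t-1$: if $w$ has a neighbour outside $N[v_0]$, contracting the new edge $v_0w$ can yield a vertex with $t$ neighbours. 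So Lemma \ref{lem::2.2} cannot be invoked on degree bookkeeping alone, and your primary route collapses exactly where the lemma's difficulty lives. (Your side remark that $G^*[B]$ consists of paths ``as shown in the proof of Lemma \ref{lem-3.5'}'' is also slightly off: there $G^*[B]$ is a single pendant path only under the contradiction hypothesis $|N^2(u^*)|=1$; in general it is a disjoint union of paths, by Lemma \ref{lem::3.3}(iii) together with connectivity.)

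Your fallback rotation sketch omits the mechanism the paper actually uses to make any modification legitimate: the vertex $z^*$ realizing the Merris bound (Lemma \ref{lem::2.5}) combined with the sharpened lower bound $q^*>2t-2-\frac{2}{t-1}$ of Lemma \ref{lem::3.1}. Since $d(z^*)=t-1$ (Lemma \ref{lem::3.5}) while $d(v_0)\le t-2$, the paper splits on whether $z^*\in A_0\cup\{u^*\}$ or $z^*\in A_1\setminus\{v_0\}$, and in each case feeds low-degree neighbours of $z^*$ into (\ref{equ::5}) to squeeze $q^*$ against the lower bound, thereby \emph{forcing} degrees. In the first case this forces $d(v_0)=t-2$ and then $d(v_1)=t-1$ for the unique non-neighbour $v_1$ of $v_0$ in $A$, whereupon a $K_{1,t}$-minor is exhibited inside $G^*$ itself via $|N_{G^*}(v_1)\cup N_{G^*}(v_2)|=t+2$ --- no modification of $G^*$ at all, a contradiction mode your proposal never contemplates. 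In the second case $d(w_1)=t-2$ and the non-adjacency of $z^*$ and $v_0$ are forced, the rotation $G'=G^*-z^*w_1+z^*v_0$ is performed, the identity $(q^*-t+2)(x_{u^*}-x_{z^*})=x_{v_0}-x_{w_1}$ supplies the hypothesis of Lemma \ref{lem::2.6}, and, crucially, minor-freeness of $G'$ is certified not by $\Delta(G')=t-1$ but by explicitly bounding $|N_{G'}(x)\cup N_{G'}(y)|\le t+1$ for the relevant adjacent pairs. Without the $z^*$-based degree forcing you cannot rule out configurations such as $d(v_0)\le t-3$ with all candidate targets saturated, and you have no structural control with which to verify that your rotated graph avoids a $K_{1,t}$-minor; those two points are the actual content of the paper's proof, and your sketch supplies neither.
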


\begin{proof}
Suppose to the contrary that there exists a vertex $ v_{0} \in A_{1}$ such that $d_{B} (v_{0}) = 0 $. Clearly, $ d_{G^{*}} (v_{0}) \leq t-2 $ and $d(v)\leq t-2$ for any $v\in B$.
Recall that $ d(z^{*}) = t-1$. So $z^{*} \notin B \cup \{v_{0}\} $.
Thus $z^{*} \in (A_{0}\cup \{u^{*}\})\cup (A_{1}\backslash \{v_{0}\})$. Next we will distinguish  two cases to lead a contradiction.

{\flushleft\bf Case 1. $z^{*} \in A_{0}\cup \{u^{*}\}$.}

Firstly, we have $d(v_{0})=t-2$ since otherwise  if $d(v_{0})\leq t-3$, by  Lemma \ref{lem::3.1} and (\ref{equ::5}), we have
\begin{equation*}
\begin{aligned}
2t - 2 - \frac{2}{t - 1} < q^{*} \leq& d(z^{*})+\frac{1}{d(z^{*})}\sum_{v\in N(z^{*})}d(v)\\\leq& d(z^{*}) + \frac{(d(z^{*})-1)\Delta(G^{*})+d_{G^{*}}(v_{0})}{d(z^{*})}\\\leq&t-1 + \frac{(t-2)(t-1)+t-3}{t-1}\\=&2t- 2 - \frac{2}{t - 1},
\end{aligned}
\end{equation*}
a contradiction. Thus $|\overline{N}_{A}(v_{0})\cap A_{1}| = 1$.
Let   $\{v_{1}\}=\overline{N}_{A}(v_{0})\cap A_{1}$. Then $d(v_{1})=t-1$. Otherwise, if $d(v_{1})\leq t-2$, by Lemma \ref{lem::3.1} and  (\ref{equ::5}), we have
\begin{equation*}
\begin{aligned}
2t - 2 - \frac{2}{t - 1} < q^{*} \leq& d(z^{*})+\frac{1}{d(z^{*})}\sum_{v\in N(z^{*})}d(v)\\\leq& d(z^{*}) + \frac{(d(z^{*})-2)\Delta(G^{*})+d_{G^{*}}(v_{1})+d_{G^{*}}(v_{0})}{d(z^{*})}\\\leq&t-1 + \frac{(t-3)(t-1)+2(t-2)}{t-1}\\=&2t- 2 - \frac{2}{t - 1},
\end{aligned}
\end{equation*}
a contradiction. Thus from Lemmas \ref{lem::3.2} and  \ref{lem::3.3}(ii), we have $d_{B}(v_{1})=1$ and $v_{1}$ is adjacent to all vertices in $A\backslash \{v_{0},v_{1}\}$. Let $N_{B}(v_{1})=\{w_{1}\}$. Since $|N^{2} (u^{*}) | \geq 2$ by Lemma \ref{lem-3.5'}, there exists a vertex $w_{2} \in N^{2} (u^{*})\backslash \{w_{1}\}$.
Choosing  a vertex, say $v_{2}$, in $N_{A}(w_{2})$.
Clearly,  $v_{2}$ is adjacent to $v_{0}$ and $v_{1}$.
Therefore, we have
$$|N_{G^{*}}(v_{1})\cup N_{G^{*}}(v_{2})|=|A\cup \{u^{*},w_{1},w_{2}\}|=t+2,$$
which implies $G^*$ has a $K_{1,t}$-minor, a contradiction.
{\flushleft\bf Case 2. $z^{*} \in A_{1}\backslash \{v_{0}\}$.}

By Lemmas \ref{lem::3.2} and \ref{lem::3.3}(ii) and the fact $d(z^{*})=t-1$, we have $d_{B}(z^{*})=1$. Let $N_{B}(z^{*})=\{w_{1}\}$.
By Lemmas \ref{lem::3.2}, \ref{lem::3.3} and \ref{lem-3.5'}, we have $d(w_{1})\leq t-2$.
Moreover, we claim that $d(w_{1})=t-2$. Otherwise, if $d(w_{1})\leq t-3$, by (\ref{equ::5}) and Lemma \ref{lem::3.1}, we have
\begin{equation*}
\begin{aligned}
2t - 2 - \frac{2}{t - 1} < q^{*} \leq& d(z^{*})+\frac{1}{d(z^{*})}\sum_{v\in N(z^{*})}d(v)\\\leq& d(z^{*}) + \frac{(d(z^{*})-1)\Delta(G^{*})+d_{G^{*}}(w_{1})}{d(z^{*})}\\\leq&t-1 + \frac{(t-2)(t-1)+t-3}{t-1}\\=&2t- 2 - \frac{2}{t - 1},
\end{aligned}
\end{equation*}
a contradiction. Thus $|N^{2}(u^{*})|\leq2$ by Lemma \ref{lem::3.3}
and the fact $|A|=t-1$. By Lemma \ref{lem-3.5'}, we get $|N^{2}(u^{*})|=2$ . Let $w_{2}=N^{2}(u^{*})\backslash \{w_{1}\}$. Then $|N_{A}(w_{2})|=1$. Moreover, we have $z^{*} \in \overline{N}_{A}(v_{0})\cap A_{1}$ since otherwise  if $z^{*}$ is adjacent to $v_{0}$. By Lemma \ref{lem::3.1} and  (\ref{equ::5}), we obtain
\begin{equation*}
\begin{aligned}
2t - 2 - \frac{2}{t - 1} < q^{*} \leq& d(z^{*})+\frac{1}{d(z^{*})}\sum_{v\in N(z^{*})}d(v)\\\leq& d(z^{*}) + \frac{(d(z^{*})-2)\Delta(G^{*})+d_{G^{*}}(w_{1})+d_{G^{*}}(v_{0})}{d(z^{*})}\\\leq&t-1 + \frac{(t-3)(t-1)+2(t-2)}{t-1}\\=&2t- 2 - \frac{2}{t - 1},
\end{aligned}
\end{equation*}
a contradiction.
Now let $G^{'} = G - z^{*}w_{1} + z^{*}v_{0} $. Clearly, $\Delta(G^{'})=\Delta(G^{*})=t-1$ and
$$\left\{\begin{array}{ll}
N_{G^{'}}(z^{*}) = (N_{G^{*}}(z^{*}) \backslash \{w_{1}\}) \cup \{v_{0}\},\\
N_{G^{'}}(v_{0}) = N_{G^{*}}(v_{0}) \cup \{z^{*}\},\\
N_{G^{'}}(w_{1}) = N_{G^{*}}(w_{1}) \backslash \{z^{*}\}.
\end{array}\right.
$$
If $ d(w_{1})=t-2= 2$, then  $N_{A}(w_{1})=\{z^{*}\}$, which implies that $G^{'}$ is a subgraph of $S^{n-t}(K_{t})$, and so $G^{'}$ is $K_{1, t}$-minor free.
If $ d(w_{1})=t-2 \geq 3$, then $t\geq 5$.
Since  $G^{*}$ is connected, we have $G^{'}$ is connected.
Taking  $u_{0} \in N_{G^{*}}(v_{0})$ and $u_{1} \in N_{G^{*}}(z^{*}) \backslash \{w_{1}\}$.
Clearly, $u_{0},u_{1} \in A \cup \{u^{*}\}$.  By Lemma \ref{lem::3.3}(i) and (ii) we have $|N_{B}(u_{0})|, |N_{B}(u_{1})|\leq 1$.   Thus we obtain
$$
\left\{\begin{array}{ll}
|N_{G^{'}}(z^{*})\cup N_{G^{'}}(u_{1})|=|A\cup \{u^{*}\} \cup N_{B}(u_{1})|\leq t+1,\\
|N_{G^{'}}(v_{0})\cup N_{G^{'}}(u_{0})|\leq|A\cup \{u^{*}\} \cup N_{B}(u_{0})|\leq t+1,\\
|N_{G^{'}}(v_{0})\cup N_{G^{'}}(z^{*})|=|A\cup \{u^{*}\} |= t.
\end{array}\right.
$$
Therefore $G^{'}$ is also $K_{1,t}$-minor free.
By the eigen-equation of $Q(G^{*})$, we have
\begin{equation*}
\begin{array}{ll}
q^{*} \cdot x_{u^{*}} = d(u^{*}) \cdot x_{u^{*}} + \sum\limits_{v \in A } x_{v} = (t-1) \cdot x_{u^{*}} +x_{z^{*}}+x_{v_0} + \sum\limits_{v \in A\backslash \{z^{*},v_{0}\} } x_{v},\\
q^{*} \cdot x_{z^{*}} = d(z^{*}) \cdot x_{z^{*}} + \sum\limits_{v \in N(z^{*}) } x_{v} = (t-1) \cdot x_{z^{*}} +x_{u^{*}} + x_{w_{1}} +\sum\limits_{v \in A\backslash \{z^{*},v_{0}\}} x_{v}.
\end{array}
\end{equation*}
Then we get
$(q^{*} -t +2)   (x_{u^{*}}-x_{z^{*}}) =x_{v_{0}} -x_{w_{1}}.$
Notice that  $q^{*} -t +2 > 0$ from Lemma \ref{lem::3.1}.
We have $x_{v_{0}} \geq x_{w_{1}}$, and thus $ q_{1}(G^{'}) > q^{*}$ from Lemma \ref{lem::2.6}, which contradicts the maximality of $q^{*}$.

It completes the proof.
\end{proof}

Set $ A_{10} = \{ v \in A_{1} \mid d_{A}(v) = t - 3 \} $ and $ A_{11}=A_{1} \backslash A_{10} = \{ v \in A_{1} \mid d_{A}(v) \leq t - 4 \}$.

\begin{lem}\label{lem::3.7}
For $ | A_{1}|  \geq 3 $, $ G^{*}[A_{10}]$ is a clique. Moreover, $|\overline{N}_{A_{11}}(v)|=1$ for each vertex $v \in A_{10}$.
\end{lem}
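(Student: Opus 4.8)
The two assertions are essentially one statement. Each $v\in A_{10}$ satisfies $d_{A}(v)=t-3$ while $|A\setminus\{v\}|=t-2$, so $v$ has exactly one non-neighbour in $A$, and this non-neighbour cannot lie in $A_{0}$ (every vertex of $A_{0}$ is adjacent to all of $A$). Hence, once (i) is proved, the lone non-neighbour cannot lie in $A_{10}$ either, so it lies in $A_{11}$ and $|\overline{N}_{A_{11}}(v)|=1$, which is (ii). Consequently the whole lemma reduces to showing that \emph{no two vertices of $A_{10}$ are non-adjacent}, i.e. that $G^{*}[A_{10}]$ is a clique.

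The plan is to argue by contradiction. Suppose $v_{1},v_{2}\in A_{10}$ with $v_{1}v_{2}\notin E(G^{*})$. Then each is the unique $A$-non-neighbour of the other, so both are adjacent to $u^{*}$ and to every vertex of $A\setminus\{v_{1},v_{2}\}$, and both have degree $t-1$. By Lemma~\ref{lem::3.6} each has exactly one neighbour in $B$, say $w_{1}=N_{B}(v_{1})$ and $w_{2}=N_{B}(v_{2})$, and these lie in $N^{2}(u^{*})$. Because $v_{1},v_{2}$ already have maximum degree, no edge can be added at either of them, so the key device is a degree-preserving $2$-switch. Assuming first $w_{1}\neq w_{2}$, I would set
\[
G'=G^{*}-v_{1}w_{1}-v_{2}w_{2}+v_{1}v_{2}+w_{1}w_{2}.
\]
Every degree is unchanged, so $D(G')=D(G^{*})$ and only the adjacency part of $Q$ moves; evaluating the Rayleigh quotient of $Q(G')$ at the Perron vector $x$ of $G^{*}$ gives
\[
x^{T}Q(G')x-q^{*}x^{T}x=2\,(x_{v_{1}}-x_{w_{2}})(x_{v_{2}}-x_{w_{1}}).
\]
If both factors are positive, then $q_{1}(G')\ge x^{T}Q(G')x/x^{T}x>q^{*}$, contradicting the maximality of $q^{*}$, provided $G'$ is a legitimate competitor, i.e. connected and $K_{1,t}$-minor free.

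Three things must therefore be established, and this is where the real work lies. First, the Perron-weight comparison $x_{v_{i}}>x_{w_{j}}$: since $v_{1},v_{2}\in A$ are adjacent to the dominant vertex $u^{*}$ while $w_{1},w_{2}\in N^{2}(u^{*})$ are not, one expects the $A$-weights to strictly dominate the weights of their distance-two partners, and I would prove this by comparing the eigen-equations at $v_{i}$ and $w_{j}$ together with $x_{u^{*}}=\max_{u}x_{u}$. Second, $G'$ must be connected and $K_{1,t}$-minor free; here $\Delta(G')=t-1$ excludes a degree-$t$ vertex, and a double-star/contraction check in the style of Lemma~\ref{lem::3.3} must rule out a contracted $K_{1,t}$, while the hypothesis $|A_{1}|\ge 3$ is exactly what keeps $B$ attached to the core after the switch: the third vertex of $A_{1}$ guaranteed by Lemma~\ref{lem-3.5'} supplies an independent $B$-attachment, so that deleting $v_{1}w_{1},v_{2}w_{2}$ cannot disconnect the graph. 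Third, the degenerate case $w_{1}=w_{2}$, where the $2$-switch is undefined, needs a separate rotation argument that again exploits the third vertex of $A_{1}$ and Lemma~\ref{lem::2.6}. I expect the main obstacle to be precisely the verification that $G'$ remains connected and $K_{1,t}$-minor free under the switch, since this is the step that genuinely uses $|A_{1}|\ge 3$ and forces the case distinction on $w_{1},w_{2}$; by contrast, the Rayleigh-quotient inequality is routine once the weight comparison is in hand.
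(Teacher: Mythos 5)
Your opening reduction is fine and agrees with the paper: since $d_A(v)=t-3$ and $|A\setminus\{v\}|=t-2$, each $v\in A_{10}$ has exactly one non-neighbour in $A$, it cannot lie in $A_0$, and once the clique statement is proved it cannot lie in $A_{10}$, so the lemma does reduce to showing $G^*[A_{10}]$ is a clique. But your argument for the clique statement is a plan whose deferred steps are exactly the ones that break. First, strictness: the eigen-equation comparison you sketch (it is precisely the computation of Lemma \ref{lem::3.9}, applied at $u^*$ and $v_1$) gives $(q^*-t+2)(x_{u^*}-x_{v_1})=x_{v_2}-x_{w_1}$, hence only the weak inequalities $x_{w_1}\le x_{v_2}$ and $x_{w_2}\le x_{v_1}$; the product $(x_{v_1}-x_{w_2})(x_{v_2}-x_{w_1})$ in your Rayleigh identity may vanish, so you only get $q_1(G')\ge q^*$ and are left with an equality analysis you have not supplied. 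Second, your connectivity claim is false as stated: if $w_1$ and $w_2$ are pendant at $v_1$ and $v_2$ (a configuration fully consistent with Lemmas \ref{lem::3.3} and \ref{lem::3.6}; the third $A_1$-vertex keeps the \emph{rest} of $B$ attached but does nothing for $w_1,w_2$), then your switch turns $\{w_1,w_2\}$ into an isolated $K_2$ component, and the resulting inequality concerns an $(n-2)$-vertex component rather than a legitimate competitor in the fixed-order extremal problem. The minor-freeness of $G'$ and the degenerate case $w_1=w_2$ are likewise only promised, not proved.

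More importantly, you miss the decisive observation, which makes all perturbation machinery unnecessary: under your supposition, $G^*$ \emph{itself} already contains a $K_{1,t}$-minor. If $v_1,v_2\in A_{10}$ are non-adjacent, each dominates $(A\setminus\{v_1,v_2\})\cup\{u^*\}$, and by Lemma \ref{lem::3.6} every vertex of $A_1$ has a unique $B$-neighbour. Pick $v_3\in A_1\setminus\{v_1,v_2\}$ (this is where $|A_1|\ge 3$ enters) with $B$-neighbour $w_3$. If $w_3\neq w_1$, then $v_1v_3\in E(G^*)$ and $N_{G^*}(v_1)\cup N_{G^*}(v_3)=A\cup\{u^*,w_1,w_3\}$ has $t+2$ elements, so contracting the edge $v_1v_3$ yields a vertex adjacent to $t$ others, a $K_{1,t}$-minor; symmetrically if $w_3\neq w_2$; and if $w_1=w_2=w_3$, Lemma \ref{lem-3.5'} supplies $w_4\in N^2(u^*)\setminus\{w_1\}$ whose $A$-neighbour $v_4$ is necessarily distinct from $v_1,v_2$, and contracting $v_1v_4$ gives the same count. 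This short case analysis is the paper's entire proof; your spectral route is not only unfinished at its critical joints but aims at a contradiction that is already sitting inside $G^*$ combinatorially.
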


\begin{proof}
Suppose to the contrary that there exists two non-adjacent vertices $v_{1}, v_{2} \in A_{10}$. Since $d_{A}(v_{1})=d_{A}(v_{2})=t-3$, we get that both $v_{1}$ and $v_{2}$ dominate $A \backslash\{v_{1}, v_{2}\}$. Since $|A_{1}| \geq 3$, there exists a vertex $v_{3} \in A_{1} \backslash\{v_{1}, v_{2}\}$.
By Lemma \ref{lem::3.6}, there exists a unique vertex $w_{i} \in N_{B}(v_{i})$ for $i \in\{1,2,3\}$.

If $w_{3} \neq w_{1}$, then $|N_{G^{*}}(v_{1}) \cup N_{G^{*}}(v_{3})| = |A \cup\{u^{*}, w_{1}, w_{3}\}|=t+2$.
If $w_{3} \neq w_{2}$, then we similarly have $|N_{G^{*}}(v_{2}) \cup N_{G^{*}}(v_{3})| = t+2$.	
If $w_{1}=w_{2}=w_{3}$. Since $|N^{2}(u^{*})| \geq 2$ by Lemma \ref{lem-3.5'}, there exists a vertex $w_{4} \in N^{2}(u^{*}) \backslash\{w_{1}\}$ and a vertex $v_{4} \in N_{A}(w_{4})$. It follows that
$$|N_{G^{*}}(v_{1}) \cup N_{G^{*}}(v_{4})| = |A \cup\{u^{*}, w_{1}, w_{4}\}|=t+2.$$
Above three cases, we always find a $K_{1, t}$-minor,   a contradiction.
Thus
$G^{*}[A_{10}]$ is a clique. In addition, by the definition of $A_{10}$, $|\overline{N}_{A}(v)|=1$ for any $v \in A_{10}$.
\end{proof}

\begin{lem}\label{lem::3.8}
 $|A_{11}| \leq 2 $.
\end{lem}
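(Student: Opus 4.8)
\textbf{Proof proposal for Lemma \ref{lem::3.8}.}

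The plan is to argue by contradiction, assuming $|A_{11}| \geq 3$, and to exploit the degree structure imposed by the earlier lemmas together with the bound \eqref{equ::5} to force a violation of $q^{*} > 2t - 2 - \frac{2}{t-1}$ from Lemma \ref{lem::3.1}. Recall that every vertex $v \in A_{11}$ satisfies $d_{A}(v) \leq t-4$, and by Lemma \ref{lem::3.6} we have $d_{B}(v) = 1$, so $d_{G^{*}}(v) \leq t-2$ (counting the edge to $u^{*}$, the at most $t-4$ edges inside $A$, and the single edge to $B$). Thus the vertices of $A_{11}$ are precisely a supply of low-degree neighbors, and the strategy is to locate a vertex $z^{*}$ of maximum degree $t-1$ (which exists by Lemma \ref{lem::3.5}) whose neighborhood contains enough vertices of $A_{11}$ to pull the averaged-degree bound below the required threshold.

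First I would observe that $u^{*}$ itself is adjacent to all of $A$, hence to all of $A_{11}$, so if $z^{*} = u^{*}$ (or more generally $z^{*} \in A_{0} \cup \{u^{*}\}$), then $N(z^{*})$ contains every vertex of $A_{11}$. With $|A_{11}| \geq 3$ and each such vertex having degree at most $t-2$, the sum $\sum_{v \in N(z^{*})} d(v)$ is at most $(d(z^{*})-3)\Delta(G^{*}) + 3(t-2) = (t-4)(t-1) + 3(t-2)$, and substituting into \eqref{equ::5} gives
\begin{equation*}
q^{*} \leq (t-1) + \frac{(t-4)(t-1) + 3(t-2)}{t-1} = 2t - 2 - \frac{3}{t-1},
\end{equation*}
which contradicts $q^{*} > 2t - 2 - \frac{2}{t-1}$ from Lemma \ref{lem::3.1}. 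The same counting should go through whenever $z^{*}$ is adjacent to at least three vertices of $A_{11}$, since three neighbors of degree at most $t-2$ already produce the deficit $\frac{3}{t-1} > \frac{2}{t-1}$.

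The remaining and genuinely delicate case is when $z^{*} \in A_{1}$ itself, so that $z^{*}$ need not see all of $A_{11}$; here I would count how many vertices of $A_{11}$ lie in $N(z^{*})$, using that $z^{*}$ has degree $t-1$ while each $A_{11}$-vertex is non-adjacent to at least three vertices of $A$ (since $d_{A} \leq t-4$ and $|A| = t-1$). The main obstacle will be this case: I expect to have to combine the adjacency deficiencies of several $A_{11}$-vertices with a Lemma \ref{lem::2.6}-type rotation of edges toward $u^{*}$, comparing Perron components $x_{u^{*}} \geq x_{z^{*}}$ to produce a $K_{1,t}$-minor-free graph of strictly larger $Q$-index, exactly as in the proofs of Lemmas \ref{lem::3.5'} and \ref{lem::3.6}. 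The bookkeeping of which vertices remain outside the various neighborhoods, and verifying that the rotated graph stays $K_{1,t}$-minor free (i.e. that no pair of vertices acquires a combined neighborhood of size $t+2$), is where the argument requires the most care.
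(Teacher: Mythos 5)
Your first case ($z^{*} \in A_{0} \cup \{u^{*}\}$) matches the paper's Case 1 exactly, but the proposal as a whole has two genuine gaps. The first is an incomplete case analysis for the location of $z^{*}$. From $d(z^{*})=t-1$ (Lemma \ref{lem::3.5}) you may exclude $A_{11}$ (degrees at most $t-2$, as you computed) and $B\setminus N^{2}(u^{*})$ (degrees at most $2$ by Lemma \ref{lem::3.3}(iii), since such vertices have no neighbor in $A\cup\{u^{*}\}$), but you cannot exclude $N^{2}(u^{*})$: a vertex there can have $t-2$ neighbors in $A$ plus one in $B$, hence degree $t-1$. Your proposal never considers $z^{*}\in N^{2}(u^{*})$ at all. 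The paper's Case 3 dispatches it in one stroke: such a $z^{*}$ misses at most one vertex of $A$, hence is adjacent to at least two vertices of $A_{11}$ once $|A_{11}|\geq 3$, each of degree at most $t-2$, and (\ref{equ::5}) then gives $q^{*}\leq 2t-2-\frac{2}{t-1}$, contradicting the strict inequality of Lemma \ref{lem::3.1}.

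The second gap is that the case you call ``genuinely delicate'' ($z^{*}\in A_{1}$) is left as an unexecuted sketch, and the rotation machinery you anticipate is unnecessary. First, $z^{*}\in A_{11}$ is impossible outright (its vertices have degree at most $t-2$, while $d(z^{*})=t-1$), so this case is really $z^{*}\in A_{10}$. Second, you set your threshold too high: because Lemma \ref{lem::3.1} gives the \emph{strict} bound $q^{*}>2t-2-\frac{2}{t-1}$ for $n\geq t+2$, a deficit of exactly $\frac{2}{t-1}$ --- that is, just \emph{two} neighbors of degree at most $t-2$ --- already yields the contradiction; you insisted on three. And two such neighbors come for free from Lemma \ref{lem::3.7}: $|A_{11}|\geq 3$ forces $|A_{1}|\geq 3$, so each vertex of $A_{10}$ has exactly one non-neighbor in $A_{11}$ (namely $|\overline{N}_{A_{11}}(v)|=1$), whence $z^{*}\in A_{10}$ is adjacent to at least $|A_{11}|-1\geq 2$ vertices of $A_{11}$. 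With these two observations the entire lemma is a three-case application of (\ref{equ::5}), with no Lemma \ref{lem::2.6} edge rotation and none of the minor-freeness bookkeeping you feared. (A minor count to fix as well: a vertex $v\in A_{11}$ has at least two non-neighbors in $A\setminus\{v\}$, from $d_{A}(v)\leq t-4$ and $|A\setminus\{v\}|=t-2$, not three.)
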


\begin{proof}
Suppose by the contrary that $| A_{11}| \geq 3 $.
Note that   $d_{G^{*}}(v) \leq t-2$ for any $v \in A_{11}$ and
 $ d_{G^{*}} (w) \leq 2$ for any $ w \in B \backslash N^{2}(u^{*})$ from Lemma \ref{lem::3.3}.
By Lemma \ref{lem::3.5} we know that $d(z^{*})=t-1$, and thus $z^{*} \notin A_{11} \cup (B \backslash N^{2}(u^{*})) $.
That is, $z^{*} \in  (A_{0} \cup \{u^{*}\}) \cup A_{10} \cup N^{2}(u^{*})$.
Next we will consider   three cases to leads a contradiction, respectively.
{\flushleft\bf Case 1. $z^{*} \in A_{0} \cup \{u^{*}\}$.}

Note that all vertices in $A_{0} \cup \{u^{*}\}$ are adjacent to the vertices in $A_{11}$. From (\ref{equ::5}) and Lemma \ref{lem::3.1}, we have
\begin{equation*}
\begin{aligned}
2t - 2 - \frac{2}{t - 1} < q^{*} \leq& d(z^{*})+\frac{1}{d(z^{*})}\sum_{v\in N(z^{*})}d(v)\\
\leq& d(z^{*}) + \frac{(d(z^{*})-|A_{11}|)\Delta(G^{*})+|A_{11}|(t-2)}{d(z^{*})}\\
=&2t-2 - \frac{|A_{11}|}{t-1}\\
\leq & 2t- 2 - \frac{3}{t - 1},
\end{aligned}
\end{equation*}
which is a contradiction.

{\flushleft\bf Case 2. $z^{*} \in A_{10}$. }

Since $| A_{11}| \geq 3 $, we have $| A_{1}| \geq 3 $. By Lemma \ref{lem::3.7}, $ A_{10} $ induces a clique. By the definition of $A_{10}$,   $z^{*}$ has just a non-neighbor, say $z_0$ in $A_{11}$. Thus, $| A_{11}| \geq 3 $ implies that $z^{*}$ is adjacent to at least two vertices in $A_{11}$. By  (\ref{equ::5}) and Lemma \ref{lem::3.1}, we obtain
\begin{equation*}
\begin{aligned}
2t - 2 - \frac{2}{t - 1} < q^{*} \leq& d(z^{*})+\frac{1}{d(z^{*})}\sum_{v\in N(z^{*})}d(v)\\
\leq& d(z^{*}) + \frac{(d(z^{*})-(| A_{11}|-1))\Delta(G^{*})+\sum_{v\in A_{11} \backslash \{z_0\}}d(v)}{d(z^{*})}\\
=&2t-2 - \frac{(| A_{11}|-1)(t-1)-\sum_{v\in A_{11} \backslash \{z_0\}}d(v)}{t-1}\\ \leq& 2t-2 - \frac{(| A_{11}|-1)[(t-1)-\max \left\lbrace d(v) \mid v \in A_{11}\right\rbrace]}{t-1}\\
\leq& 2t-2 - \frac{|A_{11}|-1}{t-1}\\
\leq&2t- 2 - \frac{2}{t - 1},
\end{aligned}
\end{equation*}
 a contradiction.

{\flushleft\bf Case 3. $z^{*} \in N^{2}(u^{*})$.}

From Lemmas \ref{lem::3.3}(iii) and   \ref{lem-3.5'}, we have $ d_{B} (w) \leq 1$ for any $ w \in N^{2} (u^{*}) $ and $|N^{2} (u^{*}) | \geq 2$. Notice that $d(z^{*})=t-1$ and $|A|=t-1$. Then $z^{*}$ is adjacent to $t-2$ vertices in $A$.
  Note that $| A_{11}| \geq 3 $.  we get that $z^{*}$ is adjacent to at least two vertices $z_1, z_2 \in A_{11}$. By  Lemma \ref{lem::3.1} and (\ref{equ::5}), we have
\begin{equation*}
\begin{aligned}
2t - 2 - \frac{2}{t - 1} < q^{*} \leq& d(z^{*})+\frac{1}{d(z^{*})}\sum_{v\in N(z^{*})}d(v)\\
\leq& d(z^{*}) + \frac{(d(z^{*})-2)\Delta(G^{*})+d(z_1)+d(z_2)}{d(z^{*})}\\
\leq&t-1 + \frac{(t-3)(t-1)+2(t-2)}{t-1}\\
=&2t- 2 - \frac{2}{t - 1},
\end{aligned}
\end{equation*}
a contradiction.
\end{proof}

\begin{lem}\label{lem::3.9}
Let $ v_{i}^{o} \in A_{10} $, $ w_{i} \in N_{B} (v_{i}^{o}) $ and $ \overline{N}_{A}(v_{i}^{o}) = \{v_{i}\} $, then $ x_{w_{i}} \leq x_{v_{i}}$.
\end{lem}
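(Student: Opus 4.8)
The plan is to compare the eigen-equations of $Q(G^{*})$ at the two vertices $v_{i}^{o}$ and $u^{*}$, exactly in the spirit of the component comparisons already used in Lemmas~\ref{lem::3.4} and~\ref{lem::3.6}. First I would record the neighbourhoods precisely. Since $v_{i}^{o}\in A_{10}$ we have $d_{A}(v_{i}^{o})=t-3$, so its unique non-neighbour inside $A$ is $v_{i}$ and $N_{A}(v_{i}^{o})=A\backslash\{v_{i}^{o},v_{i}\}$; by Lemma~\ref{lem::3.6} it has a single neighbour $w_{i}$ in $B$; and $v_{i}^{o}\sim u^{*}$ because $v_{i}^{o}\in A$. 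Hence $d(v_{i}^{o})=t-1$ and $N(v_{i}^{o})=\{u^{*}\}\cup(A\backslash\{v_{i}^{o},v_{i}\})\cup\{w_{i}\}$. On the other hand, $N(u^{*})=A$ by the definitions of $A$ and $B$.

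Writing $S=\sum_{v\in A\backslash\{v_{i}^{o},v_{i}\}}x_{v}$ for the part the two vertices share, the eigen-equations read
\begin{equation*}
\begin{aligned}
q^{*}x_{v_{i}^{o}}&=(t-1)x_{v_{i}^{o}}+x_{u^{*}}+S+x_{w_{i}},\\
q^{*}x_{u^{*}}&=(t-1)x_{u^{*}}+x_{v_{i}^{o}}+x_{v_{i}}+S.
\end{aligned}
\end{equation*}
Subtracting the second equation from the first cancels $S$ and yields the clean identity
\begin{equation*}
(q^{*}-t+2)\,(x_{v_{i}^{o}}-x_{u^{*}})=x_{w_{i}}-x_{v_{i}}.
\end{equation*}

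To finish, I would invoke Lemma~\ref{lem::3.1}: since $q^{*}>2t-3$ we have $q^{*}-t+2>t-1>0$, while $x_{v_{i}^{o}}-x_{u^{*}}\le 0$ because $x_{u^{*}}=\max_{u}x_{u}$ is the largest component of the Perron vector. Thus the left-hand side is non-positive, which forces $x_{w_{i}}\le x_{v_{i}}$, as claimed. I do not expect any genuine obstacle here: the only point demanding care is the bookkeeping of the common neighbourhood $A\backslash\{v_{i}^{o},v_{i}\}$, so that it cancels on subtraction and isolates exactly $x_{w_{i}}-x_{v_{i}}$ against $x_{v_{i}^{o}}-x_{u^{*}}$. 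The decisive structural inputs are $d_{A}(v_{i}^{o})=t-3$ (which makes $v_{i}$ the \emph{sole} $A$-non-neighbour of $v_{i}^{o}$, so that $x_{v_{i}}$ enters the equation of $u^{*}$ but not that of $v_{i}^{o}$) together with Lemma~\ref{lem::3.6} (giving the unique $B$-neighbour $w_{i}$); this is the same cancellation mechanism that drove the arguments in the preceding lemmas.
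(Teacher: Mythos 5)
Your proposal is correct and follows essentially the same route as the paper's own proof: both compare the eigen-equations of $Q(G^{*})$ at $v_{i}^{o}$ and $u^{*}$, cancel the common sum over $A\backslash\{v_{i}^{o},v_{i}\}$ to obtain the identity $x_{w_{i}}=(q^{*}-t+2)(x_{v_{i}^{o}}-x_{u^{*}})+x_{v_{i}}$, and conclude via $q^{*}-t+2>0$ (Lemma~\ref{lem::3.1}) and the maximality of $x_{u^{*}}$. Your explicit bookkeeping of $N(v_{i}^{o})$ using $d_{A}(v_{i}^{o})=t-3$ and Lemma~\ref{lem::3.6} is exactly the structural input the paper uses implicitly, so there is nothing to correct.
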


\begin{proof}
From the eigen-equations for $ Q(G^{*}) $, we see that
\begin{equation*}\begin{array}{ll}
q^{*} \cdot x_{u^{*}} = d(u^{*}) \cdot x_{u^{*}} + \sum\limits_{v \in A } x_{v} = (t-1) \cdot x_{u^{*}} + \sum\limits_{v \in A } x_{v},\\
q^{*} \cdot x_{v_{i}^{o}} = d(v_{i}^{o}) \cdot x_{v_{i}^{o}} + \sum\limits_{v \in N(v_{i}^{o}) } x_{v} = (t-1) \cdot x_{v_{i}^{o}} +x_{u^{*}} + x_{w_{i}} +\sum\limits_{v \in A \backslash \{v_{i},v_{i}^{o} \} } x_{v}.
\end{array}
\end{equation*}
It follows that
$x_{w_{i}} =(q^{*}-t+2) \cdot (x_{v_{i}^{o}} -x_{u^{*}} ) +  x_{v_{i}}$.
Since $q^{*} -t +2 > 0$ by Lemma \ref{lem::3.1} and $x_{v_{i}^{o}} \leq x_{u^{*}}$, we get $ x_{w_{i}} \leq  x_{v_{i}}$.
\end{proof}

\renewcommand\proofname{\bf Proof of Theorem \ref{thm::1.5} for $n\geq t+2$}
\begin{proof}
If $|A_{1}|=2$, then $|N^{2}(u^{*})|=2$ and $d_{A}(w)=1$ for each $w \in N^{2}(u^{*})$. Thus $G^{*}[A \cup\{u^{*}\}] \cong K_{t}-e$, where the unique non-edge lies in $A_{1}$. By Lemma \ref{lem::3.3}(iii), $G^{*}[B]$ is the union of at most two paths. 
Suppose $G^{*}[B]$ is the union of exactly two paths,  adding an edge to two pendant vertices in $B$, which leads to a $K_{1,t}$-minor free graph with larger $Q$-index, a contradiction. Therefore, $G^{*}[A_{1} \cup B]$ is a path with both endpoints in $A_{1}$, and thus $G^{*} \cong S^{n-t}(K_{t})$, as desired.

It remains to consider the case for $|A_{1}| \geq 3$ by Lemma \ref{lem-3.5'}.
Next we will  show that  $|A_{1}| \geq 3$  is impossible.
Noting that $|A_{11}| \leq 2$ from Lemma \ref{lem::3.8}. We will distinguish   two cases to lead a contradiction, respectively.

{\flushleft\bf Case 1.} \textbf{$|A_{11}|=2$.}

Let $A_{11}=\{v_{1}, v_{2}\}$. By the definition of $A_{11}$, $d_{G^{*}}(v_1), d_{G^{*}}(v_2) \leq t-2$ and each $v_{i}$ has at least one non-neighbor, say $v_{i}^{o}$ in $A_{10}$ for $i \in\{1,2\}$. Moreover, $d_{A}(v_{1}^{o})=d_{A}(v_{2}^{o})=t-3$ and $v_{1}^{o} \neq v_{2}^{o}$.
Thus $|A|=t-1\ge|A_{1}|\geq 4$, and so $ t\geq 5$.
By Lemma \ref{lem::3.3} (iii), $ d_{G^{*}} (w) \leq 2$ for each $w \in B \backslash N^{2}(u^{*})$.
Recall that $d(z^{*})=t-1$. Thus we have
\begin{equation}\label{z*}
z^{*}\in A_{0} \cup \{u^{*}\}\cup A_{10}\cup N^{2}(u^{*}).
\end{equation}

Suppose to that $z^{*} \in A_{0} \cup \{u^{*}\}$.
Notice that all vertices in $A_{0} \cup \{u^{*}\}$ are adjacent to the vertices in $A_{11}$.
By  Lemma \ref{lem::3.1} and (\ref{equ::5}), we get
\begin{equation*}
\begin{aligned}
2t - 2 - \frac{2}{t - 1} < q^{*} \leq& d(z^{*})+\frac{1}{d(z^{*})}\sum_{v\in N(z^{*})}d(v)\\\leq& d(z^{*}) + \frac{(d(z^{*})-|A_{11}|)\Delta(G^{*})+|A_{11}|(t-2)}{d(z^{*})}\\=&2t-2- \frac{|A_{11}|}{t-1}\\=&2t- 2 - \frac{2}{t - 1},
\end{aligned}
\end{equation*}
which is a contradiction.

Suppose to that $z^{*} \in N^{2}(u^{*})$.
Notice that $ d_{B} (w) \leq 1$ for any $ w \in N^{2} (u^{*}) $ and $|N^{2} (u^{*}) | \geq 2$.
Since $d(z^{*})=t-1$,
we get that $z^{*}$ is adjacent to $t-2$ vertices in $A$ and a vertex in $B$.
Thus, $|N^{2} (u^{*}) |=2$.
Let $N_{B}(z^{*})=\{w\}$.
If $w \in B \backslash N^{2}(u^{*})$, then $d_{G^{*}}(w)\leq 2$ by Lemma \ref{lem::3.3}(iii).
If $w \in N^{2}(u^{*})\backslash \{z^{*}\}$,
then $\{w\}= N^{2}(u^{*})\backslash \{z^{*}\}$, so $w$ is adjacent to $z^{*}$ and a vertex in $A$, and so $d_{G^{*}}(w) = 2$. Thus
\begin{equation}\label{d(w)}d_{G^{*}}(w)\leq 2.\end{equation}
Since $|A|=t-1$, $d(z^{*})=t-1$ and  $| A_{11}| = 2 $,
$z^{*}$ is adjacent to at least one vertex, say $v_{1}$ in $A_{11}$, and $d(v_1)\leq t-2$.
By Lemma \ref{lem::3.1}, (\ref{equ::5}) and (\ref{d(w)}), we have
\begin{equation*}
\begin{aligned}
2t - 2 - \frac{2}{t - 1} < q^{*} \leq& d(z^{*})+\frac{1}{d(z^{*})}\sum_{v\in N(z^{*})}d(v)\\
\leq& d(z^{*}) + \frac{(d(z^{*})-2)\Delta(G^{*})+d(v_{1})+d_{G^{*}}(w)}{d(z^{*})}\\
\leq& t-1 + \frac{(t-3)(t-1)+(t-2)+2}{t-1}\\
=&2t- 2 - \frac{t-2}{t - 1},
\end{aligned}
\end{equation*}
a contradiction. Thus by  (\ref{z*}) we have  $z^{*} \in A_{10}$.
Without loss of generality, we may assume that $z^{*}=v_{1}^{o}\in A_{10}$. By Lemma \ref{lem::3.6}, there exists a unique vertex $w_{1} \in N_{B}(v_{1}^{o})$.
Clearly, $d(w_{1}) \leq t-1$.
Suppose that $d(w_{1})\leq t-2$.
Recall that $v_2\in A_{11}$, and thus $d(v_{2}) \leq t-2$.
By Lemma \ref{lem::3.1} and (\ref{equ::5}), we have
\begin{equation*}
\begin{aligned}
2t - 2 - \frac{2}{t - 1} < q^{*} \leq& d(v_{1}^{o})+\frac{1}{d(v_{1}^{o})}\sum_{v\in N(v_{1}^{o})}d(v)\\\leq&  d(v_{1}^{o}) + \frac{(d(v_{1}^{o})-2)\Delta(G^{*})+d_{G^{*}}(v_{2})+d_{G^{*}}(w_{1})}{d(v_{1}^{o})}\\\leq&t-1 + \frac{(t-3)(t-1)+2(t-2)}{t-1}\\=&2t- 2 - \frac{2}{t - 1},
\end{aligned}
\end{equation*}
a contradiction, Thus $d(w_{1}) = t-1$.
Suppose that $d(v_{2}) \leq t-3$. By Lemma \ref{lem::3.1} and  (\ref{equ::5}), we get
\begin{equation*}
\begin{aligned}
2t - 2 - \frac{2}{t - 1} < q^{*} \leq& d(v_{1}^{o})+\frac{1}{d(v_{1}^{o})}\sum_{v\in N(v_{1}^{o})}d(v)\\\leq& d(v_{1}^{o}) + \frac{(d(v_{1}^{o})-1)\Delta(G^{*})+d_{G^{*}}(v_{2})}{d(v_{1}^{o})}\\\leq& t-1 + \frac{(t-2)(t-1)+(t-3)}{t-1}\\=&2t- 2 - \frac{2}{t - 1},
\end{aligned}
\end{equation*}
a contradiction, Thus $d(v_{2})=t-2$.

Recall that $ d_{B} (w) \leq 1$ for any $ w \in N^{2} (u^{*}) $, $|N^{2} (u^{*}) | \geq 2$, $|A| = t-1$ and $d(w_{1})=t-1$, where $N_{B}(v_{1}^{o})=\{w_{1}\}$.
Also, $ d_{B} (v) = 1$ for any $ v \in A_{1} $.
We get that $ |N^{2} (u^{*}) | = 2$ and $d_{B}(w_{1}) = 1$.
Let $N_B(w_1)=w_{1}^{o}$ and $ \{w_{2} \}=N^{2} (u^{*}) \backslash \{w_{1}\}$.
Since $G^{*}$ is a $K_{1,t}$-minor free graph, we see that all vertices in $A \backslash \{v_{1}\}$ are adjacent to
$w_{1}$ and $v_{1}$ is adjacent to $w_{2}$. By the definition of $A_{10}$ and Lemma \ref{lem::3.6},   $d(v_{2}^{o}) = t-1$. Thus, $v_{2}^{o}v_{1}\in E(G^*)$.
Now
$|N_{G^{*}}(v_{2}^{o}) \cup N_{G^{*}}(w_{1})| = |A \cup \{ u^{*} , w_{1} , w_{1}^{o} \}|= t+2 $, which indicates that   $G^{*}$ has a $K_{1, t}$-minor,   a contradiction.

{\flushleft\bf Case 2. $|A_{11}|\leq 1$.}

By the definition of $A_{0}$ and Lemma \ref{lem::3.7},
each vertex $v \in A_{10}$ has a non-neighbor $v_{0}$ in $A_{11}$.
Thus $A_{11} \neq \emptyset$ and so $|A_{11}| = 1$,
$A_{11}=\{v_{0}\}$ and $e(\{v_{0}\}, A_{10})=0$. Note that $|A_{1}| \geq 3$, then $|A_{10}|=|A_{1}|-|A_{11}| \geq 2$. By Lemma \ref{lem::3.6}, there exists a unique vertex $w_{i} \in N_{B}(v_{i})$ for each $v_{i} \in A_{1}$, possibly, $w_{i}=w_{j}$ for some $i \neq j$.
By Lemma \ref{lem-3.5'}, we know that $|N^{2}(u^{*})| \geq 2$.
Moreover, since $G^{*}$ is connected,
we get that each component of $G^{*}[B]$ is a path of order at least one by Lemma \ref{lem::3.3}(iii).
Now we distinguish the following two subcases.

{\flushleft\bf Subcase 2.1. $|N^{2}(u^{*})| = 2$.}
Then there exists a vertex $v_{k} \in A_{10}$ such that $\{w_{k}\}= N^{2}(u^{*})\backslash \{w_{0}\}$.
Let $G^{'}=G^{*}-\{v_{i} w_{i} \}+\{v_{i} v_{0}\}$ for all $v_{i} \in A_{10} \backslash \{v_{k}\}$.
By Lemma \ref{lem::3.9}, we know  $x_{w_{i}} \leq x_{v_{0}}$ for all $v_{i} \in A_{10} \backslash \{v_{k}\}$, and by
Lemma \ref{lem::2.6}, we have $q^{*} < q_{1}(G^{'})$.
Notice that $G^{'}[A \cup\{u^{*}\}] \cong K_{t}-e$,
where $v_{0}v_{k}$ is the unique non-edge. Moreover, $G^{'}[B]$ is still the disjoint union of paths, and the edges between $A$ and $B$ just are $v_{0}w_{0}$ and $v_{k}w_{k}$. Therefore, $G^{'}$ is a subgraph of $S^{n-t}(K_{t})$. It follows that $q_{1}(S^{n-t}(K_{t})) \geq q_{1}(G^{'})>q^{*}$, a contradiction.

{\flushleft\bf Subcase 2.2. $|N^{2}(u^{*})| \geq 3$.}
By Lemma \ref{lem::3.3}(iii),
there exists a vertex $w_{k} \in N^{2}(u^{*})\backslash \{w_{0}\}$
such that $w_{k}$ and $w_{0}$ belong to two distinct paths of $G^{*}[B]$.
Denote by  $v_{k} \in N_{A}(w_{k})$.
Let $G^{'}=G^{*}-\{v_{i} w_{i} \}+\{v_{i} v_{0}\}$ for all $v_{i} \in A_{10} \backslash \{v_{k}\}$, $w_{i} \in N^{2}(u^{*})\backslash \{w_{0},w_k\}$.
Since  $x_{w_{i}} \leq x_{v_{0}}$ from Lemma \ref{lem::3.9},
we have $ q^{*} < q_{1}(G^{'})$  by Lemma \ref{lem::2.6}.
Clearly, $G^{'}[A \cup\{u^{*}\}] \cong K_{t}-e$, where $v_{0} v_{k}$ is the unique non-edge.
Moreover, $G^{'}[B]$ is still the disjoint union of some paths, and the edges between $A$ and $B$ just are $v_{0} w_{0}$ and $v_{k} w_{k}$.
Therefore, $G^{'}$ is a subgraph of $S^{n-t}(K_{t})$. It follows that $q_{1}(S^{n-t}(K_{t})) \geq q_{1}(G^{'})>q^{*}$, a contradiction.

It completes the proof.
\end{proof}



\end{document}